\tikzset{join/.code=\tikzset{after node path={%
\ifx\tikzchainprevious\pgfutil@empty\else(\tikzchainprevious)%
edge[every join]#1(\tikzchaincurrent)\fi}}}
\tikzset{>=stealth',every on chain/.append style={join},
         every join/.style={->}}
\newtheorem{definition}{Definition}[section]
\newtheorem{theorem}[definition]{Theorem}
\newtheorem{proposition}[definition]{Proposition}
\theoremstyle{definition}
\newtheorem{remark}[definition]{Remark}
\newtheorem{example}[definition]{Example}
\newcommand{\noi}{\noindent}
\newcommand{\ra}{\rightarrow}
\begin{document}
	
	\title[Free Product on Semihypergroups]{Free Product on Semihypergroups}
	
	\author[C.~Bandyopadhyay]{Choiti Bandyopadhyay}

	\address{Department of Mathematical and Statistical Sciences, University of Alberta, Canada T6G 2G1}
	\email{choiti@ualberta.ca}

	\thanks{Part of this work is included in author's PhD thesis at the University of Alberta, Canada}
	\keywords{semihypergroups, free products, universal properties, hypergroups, coset spaces, orbit spaces}
	\subjclass[2010]{Primary: 43A62, 43A60, 43A10, 43A99, 20M12; Secondary: 46G12, 46J20, 46E27}

	\begin{abstract}
		In a previous paper \cite{CB}, we initiated a systematic study of semihypergroups and had a thorough discussion about some important analytic and algebraic objects associated to this class of objects. In this paper, we investigate free structures on the category of semihypergroups. We show that the natural free product structure along with the natural topology, although fails to give a free-product for topological groups, works well on a vast non-trivial class of `pure' semihypergroups containing most of the well-known examples including non-trivial coset and orbit spaces.
	\end{abstract}
	
	\maketitle

	\section{Introduction}
	\label{intro}
	
	\quad The concept of semihypergroups arises naturally in abstract harmonic analysis in terms of left/right coset spaces and orbit spaces of certain actions which appear frequently in different areas of research including Lie groups, coset theory, homogeneous spaces, dynamical systems, ordinary and partial differential equations and orthogonal polynomials, to name a few. These objects arising from locally compact groups, although retain some interesting structures from the parent category, fail to be a topological group, semigroup or even a hypergroup \cite{CB,JE}. Hence these kinds of objects that frequently appear while studying the classical theory of topological groups, fall out of the parent category and can not quite be studied or analysed with the existing general theory for hypergroups and topological groups.
	
	%\vspace{0.03in}

	\quad The concept of a semihypergroup was first introduced around 1972 by C. Dunkl \cite{DU}, I. Jewett \cite{JE} and R. Spector \cite{SP}  independently, in pursuit of developing an unified theory involving convolution of measures that includes a wide array of objects arising from locally compact groups. In a sense, they serve as building blocks for hypergroup-axioms.

	%\vspace{0.03in}   
	
	\quad A semihypergroup, as one would expect, can be perceived as simply a generalization of a locally compact semigroup, where the product of two points is a certain probability measure, rather than a single point. But unlike hypergroups, it does not need to have an identity or involution (analogous to inverses in groups). Hence in a nutshell, a semihypergroup is essentially a locally compact topological space where the measure space is equipped with a certain convolution product, turning it into an associative algebra, which need not have an identity or an involution. It  can be seen (details in the following section) that all the orbit spaces and coset spaces can naturally be given a semihypergroup structure, though not a hypergroup structure. 
	
	%\vspace{0.03in}
	
	\quad The fact that semihypergroups contain more generalized objects arising from different fields of research than classical group and hypergroup theory and yet sustains enough structure to allow an independent theory to develop, makes it an intriguing and useful area of study with essentially a broader area of applications. Unlike hypergroups, there is a severe lack of any extensive prior research on semihypergroups since its inception. On the other hand, almost all of the important concepts and results on hypergroups and locally compact groups fail to extend naturally to semihypergroups due to the lack of an involution, an identity, a standardized Haar measure and even an algebraic structure on the underlying space in semihypergroups. %The significant examples it contains, opens up a number of new intriguing paths of research on this unified category of objects. 

	%\vspace{0.03in}
	
	\quad Hence in a previous paper \cite{CB} we initiated developing a systematic extensive theory on semihypergroups. Our approach towards developing a systematic theory was to define and explore some basic algebraic and analytic aspects and areas on semihypergroups, which are imperative for the development of any category of analytic objects in general. In the first installment of the study \cite{CB} we introduced and developed the concepts and theories of homomorphisms, ideals, kernels and almost periodic and weakly almost periodic functions on a semihypergroup. In this paper, we advance the theory by discussing free structures and universal properties on a class of semihypergroups.

	%\vspace{0.03in}
	
	\quad  We first discuss how a semihypergroup homomorphism (introduced in \cite{CB}) naturally induces a homomorphism (in the classical sense) between measure algebras. Using this property of homomorphisms, we discuss how free products can be defined given a class of semihypergroups, abiding by the general norms of defining a free product given any category of objects in general. We then introduce a sub-class of `pure' non-trivial semihypergroups and investigate in detail the existence of a unique (upto isomorphism) free product given such a family of objects. 
	
	%\vspace{0.03in}
	
	\quad The most interesting observation that comes out of this particular path of research is that the natural free product structure with the naturally induced topology, which fails to give a free product structure for topological groups \cite{MO}, indeed works well here and gives us a natural free product structure for a class of non-trivial semihypergroups. As expected, the existence of a natural free product ensures the abundance of new examples and opens up several new paths of research including compactification of a given semihypergroup and amalgamation of different kinds of semihypergroups in pursuit of specific examples and counter-examples among others. The rest of the paper is designed as below.
	
	%\vspace{0.03in}

	\quad In the next, \textit{i.e.}, second section of this article, we recall some preliminary definitions and notations given by Jewett in \cite{JE}, and introduce some new definitions required for further work. We conclude the section with listing some important useful examples of semihypergroups and hypergroups. In the third section, we initiate the study of a free structure on semihypergroups. We introduce a free product structure and construct a specific topology and convolution for a family of semihypergroups such that the resulting semihypergroup abides by an universal property equivalent to the universal property for free products in the classical theory of topological groups, thus providing us with a whole new class of examples for semihypergroups.

	%\vspace{0.03in}
	
	\quad Finally, this text is the second installment of a systematic study of this unifying category of objects with an intriguing scope for applications in different fields of research discussed above. Hence we conclude the text with some related potential problems and areas which we intend to work on and explore further in near future. Furthermore, introducing and investigating similar structures as in  \cite{MI,DA,FO2,FO3,GH,GH1,LA1,LDS,MIT,NA,PI,RO} for the case of semihypergroups and thereby realising further where and why this theory deviates from the classical theory of topological semigroups and groups, potentially provides us with deeper insight into different questions in the study of abstract harmonic analysis.
	
	%Your text comes here. Separate text sections with

	%%%%%%%%%%%%%%%%%%%%%%%%%%%%%%%%%%%%%%%%%%%%%%%%%%%%%%%%%%%%%%%%%%%%%%%%%%%%%%%%%%%%%%%%%%%%%%%%%%%%%%%%%%%%%%%%%%%%%%%%%%%%%%%%%%%%%%%%%%%%%%%%
	%%%%%%%%%%%%%%%%%%%%%%%%%%%%%%%%%%%%%%%%%%%%%%%%%         PRELIMINARY         %%%%%%%%%%%%%%%%%%%%%%%%%%%%%%%%%%%%%%%%%%%%%%%%%%%%%%%%%%%%%%%%%%
	%%%%%%%%%%%%%%%%%%%%%%%%%%%%%%%%%%%%%%%%%%%%%%%%%%%%%%%%%%%%%%%%%%%%%%%%%%%%%%%%%%%%%%%%%%%%%%%%%%%%%%%%%%%%%%%%%%%%%%%%%%%%%%%%%%%%%%%%%%%%%%%%
	%%%%%%%%%%%%%%%%%%%%%%%%%%%%%%%%%%%%%%%%%%%%%%%%%%%%%%%%%%%%%%%%%%%%%%%%%%%%%%%%%%%%%%%%%%%%%%%%%%%%%%%%%%%%%%%%%%%%%%%%%%%%%%%%%%%%%%%%%%%%%%%%
	
	\section{Preliminary}
	\label{Preliminary}
	
	\noi Here we first list some of the preliminary set of basic notations that we will use throughout the text, followed by a brief introduction to the tools and concepts needed for the formal definition of a semihypergroup (referred to as `semiconvo' in \cite{JE}). Finally, we list some of the well-known natural examples of a semihypergroup \cite{CB,JE,ZE}. All the topologies throughout this text are assumed to be Hausdorff, unless otherwise specified.
	
	\vspace{0.03in}
	
	\noi For any locally compact Hausdorff topological space $X$, we denote by $M(X)$ the space of all regular complex Borel measures on $X$, where $M^+(X)$ and $P_c(X)$ denote the subsets of $M(X)$ consisting of all finite non-negative regular Borel measures on $X$ and all probability measures with compact support on $X$ respectively. Moreover, $B(X), C(X), C_c(X)$ and $C_0(X)$ denote the function spaces of all bounded functions, bounded continuous functions, compactly supported continuous functions and continuous functions vanishing at infinity on $X$ respectively.

	\vspace{0.03in}
	
	\noi Next, we introduce two very important topologies on the positive measure space and the space of compact subsets for any locally compact topological space $X$. Unless mentioned otherwise, we will always assume these two topologies on the respective spaces.
	
	\vspace{0.03in}
	
	\noi The \textbf{cone topology} on $M^+(X)$ is defined as the weakest topology on $M^+(X)$ for which the maps $ \mu \mapsto \int_X f \ d\mu$ is continuous for any $f \in C_c^+(X)\cup \{1_X\}$ where $1_X$ denotes the characteristic function of $X$. Note that if $X$ is compact then it follows immediately from the Riesz representation theorem that the cone topology coincides with the weak*-topology on $M^+(X)$ in this case.

	\vspace{0.03in}
	
	\noi We denote by $\mathfrak{C}(X)$ the set of all compact subsets of $X$. The \textbf{Michael topology} on $\mathfrak{C}(X)$ is defined to be the topology generated by the sub-basis $\{\mathcal{C}_U(V) : U,V \textrm{ are open sets in } X\}$, where
	$$\mathcal{C}_U(V) = \{ C\in \mathfrak{C}(X) : C\cap U \neq \O, C\subset V\}.$$
	%\end{definition}
	
	\noi Note that $\mathfrak{C}(X)$ actually becomes a locally compact Hausdorff space with respect to this natural topology. Moreover if $X$ is compact then $\mathfrak{C}(X)$ is also compact. 
	
	\vspace{0.03in}
	
	\noi For any locally compact Hausdorff space $X$ and any element $x\in X$, we denote by $p_x$ the point-mass measure or the Dirac measure at the point $\{x\}$.

	\vspace{0.03in}
	
	\noi For any three locally compact Hausdorff spaces $X, Y, Z$, a bilinear map $\Psi : M(X) \times M(Y) \rightarrow M(Z)$ is called \textbf{positive continuous} if the following holds :
	\begin{enumerate}
		\item $\Psi(\mu, \nu) \in M^+(Z)$ whenever $\mu \in M^+(X), \nu \in M^+(Y)$.
		\item The map $\Psi |_{M^+(X) \times M^+(Y)}$ is continuous.
	\end{enumerate}
	%\end{definition}

	%\vspace{0.03in}
	
	\noi Now we are ready to state the formal definition for a semihypergroup. Note that we follow Jewett's notion \cite{JE} in terms of the definitions and notations, in most cases.
	
	%\vspace {0.06in}

	\begin{definition}\label{shyper}\textbf{(Semihypergroup)} A pair $(K,*)$ is called a (topological) semihypergroup if they satisfy the following properties:
		
		%\vspace{0.06in}
		
		\begin{description} % (A) (B) (C)
			\item[(A1)] $K$ is a locally compact Hausdorff space and $*$ defines a binary operation on $M(K)$ such that $(M(K), *)$ becomes an associative algebra.
			
			\item[(A2)] The bilinear mapping $* : M(K) \times M(K) \rightarrow M(K)$ is positive continuous.
			
			\item[(A3)] For any $x, y \in K$ the measure $p_x * p_y$ is a probability measure with compact support.
			
			\item[(A4)] The map $(x, y) \mapsto \mbox{ supp}(p_x * p_y)$ from $K\times K$ into $\mathfrak{C}(K)$ is continuous.
		\end{description}
	\end{definition}
	
	%\vspace{0.06in}
	
	\noi Note that for any $A,B \subset K$ the convolution of subsets is defined as the following:
	$$A*B := \cup_{x\in A, y\in B} \ supp(p_x*p_y)  .$$

	\noi  We define the concept of left (or right) topological and semitopological semihypergroups, similar to the concepts of left (or right) topological and semitopological semigroups.
	
	%\vspace{0.06in}
	
	\begin{definition}
		A pair $(K, *)$ is called a left topological semihypergroup if it satisfies all the conditions of Definition \ref{shyper} with property \textbf{(A$2$)} replaced by the following:
		\begin{description}
			\item[(A2$'$)] The map $(\mu, \nu) \mapsto \mu*\nu$ is positive and for each $\omega \in M^+(K)$ the map $L_\omega:M^+(K) \rightarrow M^+(K)$ given by $L_\omega(\mu)= \omega*\mu$ is continuous.
		\end{description}
	\end{definition}
	
	%\vspace{0.09in}
	
	\begin{definition}
		A pair $(K, *)$ is called a right topological semihypergroup if it satisfies all the conditions of Definition \ref{shyper} with property \textbf{(A$2$)} replaced by the following:
		\begin{description}
			\item[(A2$''$)] The map $(\mu, \nu) \mapsto \mu*\nu$ is positive and for each $\omega \in M^+(K)$ the map $R_\omega:M^+(K) \rightarrow M^+(K)$ given by $R_\omega(\mu)= \mu*\omega$ is continuous.
		\end{description}
	\end{definition}

	%\vspace{0.09in}
	
	\begin{definition}
		A pair $(K, *)$ is called a \textbf{semitopological semihypergroup} if it is both left and right topological semihypergroup.
	\end{definition}

	\noi Now we list some well known examples \cite{JE,ZE} of semihypergroups and hypergroups. Please see \cite[Section 3]{CB} for details on the constructions as well as why most of the structures discussed there, although attain semihypergroup structures, fail to be hypergroups.  %Thus we explore how the shortcomings explained in the introduction are overcome by the category of semihypergroups and hypergroups, as well as why most of the structures discussed there, although attain semihypergroup structures, fail to be hypergroups.
	
	%\vspace{0.06in}
	
	\begin{example} \label{extr}

		If $(S, \cdot)$ is a locally compact topological semigroup, then $(S, *)$ is a semihypergroup where $p_x*p_y = p_{x.y}$ for any $x, y \in S$. Similarly, if $(G, \cdot)$ is a locally compact topological group, then $(G, *)$ is a hypergroup with the same bilinear operation $*$, identity element $e$ where $e$ is the identity of $G$ and the involution on $G$ defined as $x \mapsto x^{-1}$.

	\end{example}

	\begin{example} \label{ex2}
		Take $T = \{e, a, b\}$ and equip it with the discrete topology. Define
		\begin{eqnarray*}
			p_e*p_a &=& p_a*p_e \ = \ p_a\\
			p_e*p_b &=& p_b*p_e \ = \ p_b\\
			p_a*p_b &=& p_b*p_a \ = \ z_1p_a + z_2p_b\\
			p_a*p_a &=& x_1p_e + x_2p_a + x_3p_b \\
			p_b*p_b &=& y_1p_e + y_2p_a + y_3p_b
		\end{eqnarray*}
		
		\noi where $x_i, y_i, z_i \in \mathbb{R}$ such that $x_1+x_2+x_3 = y_1+y_2+y_3 = z_1+z_2 = 1$ and $y_1x_3 = z_1x_1$. Then $(T, *)$ is a commutative hypergroup with identity $e$ and the identity function on $T$ taken as involution.
	\end{example}

	\begin{example} \label{ex3}
		Let $G$ be a locally compact topological group and $H$ be a compact subgroup of $G$. Also, $\mu$ be the normalized Haar measure of $H$. Consider the left coset space $S := G/H = \{xH : x \in G\}$ and the double coset space $K := G/ /H = \{HxH : x \in G\}$ and equip them with the respective quotient topologies. Then $(S, *)$ is a semihypergroup and $(K, *)$ is a hypergroup where the convolutions are given as following for any $x, y \in G$ :
		$$p_{xH} * p_{yH} = \int_H p_{(xty)H} \ d\mu(t), \ \ \ \ p_{HxH} * p_{HyH} = \int_H p_{H(xty)H} \ d\mu(t)  .$$
		
	\end{example}

	\begin{example} \label{ex4}
		Let $G$ be a locally compact topological group and $H$ be any compact group. 
		For any continuous affine action \cite{JE} $\pi$ of $H$ on $G$, consider the orbit space $\mathcal{O} := \{x^H : x \in G\}$, where $x^H = \mathcal{O}(x) = \{\pi(h, x): h\in H\}$. 
		
		%\vspace{0.05cm}
		
		\noi Let $\sigma$ be the normalized Haar measure of $H$. Consider $\mathcal{O}$ with the quotient topology and the following convolution
		$$ p_{x^H} * p_{y^H} := \int_H \int_H p_{(\pi(s, x)\pi(t, y))^H} \ d\sigma(s) d\sigma(t) .$$
		\noi Then $(\mathcal{O}, *)$ becomes a semihypergroup.
	\end{example}
	
	%%%%%%%%%%%%%%%%%%%%%%%%%%%%%%%%%%%%%%%%%%%%%%%%%%%%%%%%%%%%%%%%%%%%%%%%%%%%%%%%%%%%%%%%%%%%%%%%%%%%%%%%%%%%%%%%%%%%%%%%%%%%%%%%%%%%%%%%%%%%%%%%
	%%%%%%%%%%%%%%%%%%%%%%%%%%%%%%%%%%%%%%%%%%%     FREE STRUCTURE ON SEMIHYPERGROUPS    %%%%%%%%%%%%%%%%%%%%%%%%%%%%%%%%%%%%%%%%%%%%%%%%%%%%%%%%%%%
	%%%%%%%%%%%%%%%%%%%%%%%%%%%%%%%%%%%%%%%%%%%%%%%%%%%%%%%%%%%%%%%%%%%%%%%%%%%%%%%%%%%%%%%%%%%%%%%%%%%%%%%%%%%%%%%%%%%%%%%%%%%%%%%%%%%%%%%%%%%%%%%%
	%%%%%%%%%%%%%%%%%%%%%%%%%%%%%%%%%%%%%%%%%%%%%%%%%%%%%%%%%%%%%%%%%%%%%%%%%%%%%%%%%%%%%%%%%%%%%%%%%%%%%%%%%%%%%%%%%%%%%%%%%%%%%%%%%%%%%%%%%%%%%%%%
	
	\section{Free Structure on Semihypergroups}
	\label{Free products}
	
	\noi Free groups and free products of topological groups and semigroups have been an useful tool for a number of reasons. Providing specific examples or counter-examples, studying subgroups and quotients of a topological group, constructing an unified group with specific properties of a number of groups and studying the problem of (topologically) embedding a topological semigroup into a topological group are a few of the areas where the use of a free structure proves to be very helpful.

	%\vspace{0.05cm}

	\noi In this section, we initiate the study of a free structure on semihypergroups. We introduce a free product structure and a specific topology and convolution to a family of semihypergroups such that the resulting semihypergroup abides by an universal property equivalent to the universal property for the free product of a family of topological groups \cite{MO}.  But before we proceed to the definition and construction of a free product, let us first briefly recall some basic formal structures on an arbitrary set and examine how a homomorphism between two semihypergroups translates to a specific homomorphism between their measure algebras, in the classical sense.
	
	\vspace{0.03in}
	
	\noi Recall \cite{CB} that given two (semitopological) semihypergroups $K$ and $H$, a continuous map $\phi:K\ra H$ is called a homomorphism if for any Borel measurable function $f$ on $H$ and for any $x, y \in K$ we have that
	$$ f\circ \phi(x*y) = f(\phi(x)*\phi(y))  .$$

	\vspace{0.03in}
	
	%\begin{definition}
	\noi Let $A$ be any nonempty set. Then a \textbf{word} or string on $A$ is a finite sequence $a_1 a_2\ldots a_n$ which is either void or $a_i \in A$ for each $1\leq i\leq n$. Similarly, given a family $\{A_\alpha\}$ of nonempty sets, a word or string on $\{A_\alpha\}$ is a finite sequence $a_1 a_2\ldots a_n$ which is either void or $a_i \in A_\alpha$ for some $\alpha$, for each $1\leq i\leq n$. For convenience of notation, here we sometimes write the word $a_1 a_2\ldots a_n$ also as $(a_1a_2\ldots a_n)$.
	
	%\vspace{0.06in}
	
	\begin{definition}
		Let $\{K_\alpha\}$ be a family of semihypergroups and $w= a_1a_2\ldots a_n$ is a word on $\{K_\alpha\}$ where $a_i \in K_{\alpha_i}$ for $1\leq i \leq n$. Then $w$ is called a \textbf{reduced word} if either $w$ is empty or if $w$ satisfies the following two conditions:
		\begin{enumerate}
			\item $a_i \neq 1_{K_\alpha}$ for $1\leq i \leq n$ where $1_{K_\alpha}$ is the identity element of $K_\alpha$ (if identity exists in $K_\alpha$).
			\item ${\alpha_i} \neq {\alpha_{i+1}}$ for $1\leq i \leq n-1$.
		\end{enumerate}
	\end{definition}
	
	%\vspace{0.2in}
	
	%\begin{definition}
	\noi Given a reduced word $a_1 a_2 \ldots a_n$ on a family of semihypergroups $\{K_\alpha\}$, the \textbf{length} of the word is defined to be the number of elements in the word, and is written as $\mathit{l}(a_1 a_2 \ldots a_n) = n$.
	%\end{definition}
	
	%\vspace{0.2in}
	
	\begin{proposition}
		Given a finite set $A$ of cardinality $n$, we can form the free semihypergroup $(F_A, *)$ generated by $A$, where $F_A$ is the set of all words (finite sequences) consisting of $0$ or more elements from $A$.
	\end{proposition}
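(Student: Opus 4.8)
The plan is to build the free semihypergroup on a finite set $A$ as a discrete semihypergroup, with the semihypergroup structure descending almost entirely from the free semigroup/monoid structure on words. First I would take $F_A$ to be the set of all finite sequences (words) $a_1 a_2 \ldots a_m$ with $a_i \in A$, including the empty word $\varepsilon$, and equip $F_A$ with the discrete topology; since $A$ is finite, $F_A$ is countable and discrete, hence locally compact Hausdorff, so axiom \textbf{(A1)}'s topological requirement is immediate. Concatenation of words $(a_1\ldots a_m)\cdot(b_1\ldots b_k) = (a_1\ldots a_m b_1\ldots b_k)$ is an associative operation with identity $\varepsilon$; I then define $*$ on $M(F_A)$ by setting $p_w * p_v = p_{w\cdot v}$ for words $w,v$ and extending bilinearly (equivalently, $*$ is the convolution induced by the semigroup $(F_A,\cdot)$, exactly as in Example \ref{extr}). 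The bulk of the verification is then a special case of Example \ref{extr}: a locally compact topological semigroup gives a semihypergroup under this point-mass convolution.

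The key steps, in order, are: (i) check that $(F_A,\cdot)$ is a locally compact topological semigroup --- associativity of concatenation is standard, and since the topology is discrete every operation into any space is automatically continuous; (ii) verify \textbf{(A1)}, i.e. that $(M(F_A),*)$ is an associative algebra --- associativity of $*$ follows from associativity of $\cdot$ together with bilinearity, and one notes $*$ is well-defined since $p_w*p_v$ is a finite (indeed single-point) measure; (iii) verify \textbf{(A2)}, positive continuity of $*$ on $M^+(F_A)\times M^+(F_A)$ --- positivity is clear because $p_w*p_v = p_{w\cdot v}\in M^+(F_A)$ and positive combinations map to positive combinations, and continuity on the positive cone, in the cone topology, reduces to continuity of the integration pairings, which again is automatic in the discrete setting (or one cites Example \ref{extr} directly); (iv) verify \textbf{(A3)}: $p_w*p_v = p_{w\cdot v}$ is a point-mass, hence a probability measure with compact (one-point, thus finite) support; (v) verify \textbf{(A4)}: the map $(w,v)\mapsto \operatorname{supp}(p_w*p_v) = \{w\cdot v\}$ into $\mathfrak{C}(F_A)$ is continuous because its domain is discrete. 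Finally I would remark that this is precisely the free object: for any semihypergroup $K$ and any set-map $A\to K$, the induced map on words (sending $a_1\ldots a_m$ to the appropriate iterated convolution, well-defined by associativity) is the unique homomorphism extending it --- though the Proposition as stated only asserts that $(F_A,*)$ \emph{is} a semihypergroup, so this last observation can be kept brief or deferred.

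I do not expect a serious obstacle here: the discreteness of $F_A$ (which relies on $A$ being finite, though countability of $A$ would suffice for local compactness of the discrete space) trivializes all four continuity/topology conditions, and the algebraic axioms are inherited verbatim from the free semigroup. The only point requiring a word of care is \textbf{(A2)}: one must confirm that the bilinear extension of $p_w*p_v=p_{w\cdot v}$ genuinely lands in $M(F_A)$ and is positive-continuous rather than merely separately defined on point masses --- but this is exactly the content of Example \ref{extr} applied to the discrete semigroup $(F_A,\cdot)$, so the cleanest write-up simply invokes that example. If one wanted $F_A$ to have an identity one would use the empty word, matching condition (1) of the reduced-word definition; I would mention that $\varepsilon$ serves as identity so that $(F_A,*)$ is in fact a monoid-type semihypergroup, which is harmless and slightly strengthens the statement.
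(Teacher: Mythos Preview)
Your proposal is correct and follows essentially the same approach as the paper: build the free semigroup on $A$ with concatenation, equip it with the discrete topology, and invoke Example~\ref{extr} to obtain the semihypergroup structure. The paper's own proof is considerably terser---it simply forms the free semigroup, notes the discrete topology makes it a topological semigroup, and cites Example~\ref{extr}---whereas you spell out the verification of (A1)--(A4) and add the remark on the universal property; but the underlying argument is identical. One small aside: discreteness does not depend on $A$ being finite, and any discrete space is locally compact regardless of cardinality, so your parenthetical about countability is unnecessary (though harmless).
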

	\begin{proof}
		Given any finite set of cardinality $n$, we can easily form a free semigroup $(F, \cdot)$ generated by $A$ where  where $F$ is the set of all words (finite sequences) consisting of $0$ or more elements from $A$ and the multiplication of two words $a_1a_2\ldots a_n$, $b_1b_2\ldots b_m$ are simple concatenation given by
		$$ (a_1a_2\ldots a_n). (b_1b_2\ldots b_m) = a_1a_2\ldots a_nb_1b_2\ldots b_m   .$$
		\noi Note that equipping $F$ with the discrete topology makes it into a topological semigroup. Then as outlined in Example \ref{extr} we can consider $(F, \cdot)$ as a semihypergroup $(F_A, *)$ where as sets, we have $F=F_A$.
		\qed
	\end{proof}
	
	%\vspace{0.06in}
	
	\begin{theorem} \label{homtopc}
		Let $K$ and $H$ be two semihypergroups. Then a homomorphism $\phi:K \ra H$ induces a positive continuous homomorphism $\Gamma_\phi: M(K) \ra M(H)$.
		%\vspace{0.03in}
		%\noindent Moreover, $\Gamma_\phi$ is unique for any such homomorphism $\phi$.
	\end{theorem}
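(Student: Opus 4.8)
The plan is to take $\Gamma_\phi$ to be the push-forward of measures along $\phi$: for $\mu\in M(K)$ let $\Gamma_\phi(\mu)$ be the measure on $H$ determined by $\int_H g\, d\Gamma_\phi(\mu)=\int_K (g\circ\phi)\, d\mu$ for every bounded Borel $g$ on $H$ (equivalently $\Gamma_\phi(\mu)(E)=\mu(\phi^{-1}(E))$). That this lands in $M(H)$, i.e.\ gives a finite regular complex Borel measure, is routine from continuity of $\phi$: inner regularity is inherited because $\phi$ maps compacta to compacta, and outer regularity then follows by complementation for finite measures; $\mathbb{C}$-linearity and positivity of $\Gamma_\phi$ are immediate from the defining formula, since $g\ge 0$ forces $g\circ\phi\ge 0$. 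So the content of the theorem is the continuity of $\Gamma_\phi$ on the positive cone and the multiplicativity $\Gamma_\phi(\mu*\nu)=\Gamma_\phi(\mu)*\Gamma_\phi(\nu)$.

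For continuity I would fix $f\in C_c^+(H)\cup\{1_H\}$ and check that $\mu\mapsto \int_K (f\circ\phi)\, d\mu$ is continuous on $M^+(K)$ with the cone topology. For $f=1_H$ this is literally $\mu\mapsto\mu(K)$, one of the defining maps of the cone topology. For $f\in C_c^+(H)$ the catch is that $f\circ\phi$ is bounded continuous but \emph{not} compactly supported unless $\phi$ is proper, so it is not one of the cone-topology test functions. I would get around this with the elementary fact that on a locally compact Hausdorff space a net $\mu_i\to\mu$ in $M^+$ converging against every function in $C_c^+\cup\{1\}$ actually converges against every $h\in C_b$: given such $h$ with $0\le h\le1$ and $\varepsilon>0$, use inner regularity of $\mu$ and Urysohn to pick $g\in C_c^+(K)$, $0\le g\le 1$, with $\int(1-g)\,d\mu<\varepsilon$; then $\int h\,d\mu_i=\int hg\,d\mu_i+\int h(1-g)\,d\mu_i$, the first term converges since $hg\in C_c(K)$, and the second is squeezed by $\int(1-g)\,d\mu_i=\mu_i(K)-\int g\,d\mu_i\to\int(1-g)\,d\mu<\varepsilon$. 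Applying this to $h=f\circ\phi\in C_b(K)$ gives $\int_H f\,d\Gamma_\phi(\mu_i)=\int_K (f\circ\phi)\,d\mu_i\to\int_K(f\circ\phi)\,d\mu=\int_H f\,d\Gamma_\phi(\mu)$, which is exactly cone-continuity of $\Gamma_\phi$.

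For multiplicativity, the key calculation is on point masses: by the very definition of a homomorphism, for $x,y\in K$ and Borel $g$ on $H$ one has $\int_K(g\circ\phi)\,d(p_x*p_y)=\int_H g\,d(p_{\phi(x)}*p_{\phi(y)})$, which says precisely $\Gamma_\phi(p_x*p_y)=p_{\phi(x)}*p_{\phi(y)}=\Gamma_\phi(p_x)*\Gamma_\phi(p_y)$. I would then propagate this identity to all of $M(K)$ in three standard steps: first to finite positive linear combinations of point masses, using that $*$ is bilinear on both $M(K)$ and $M(H)$ and that $\Gamma_\phi$ is linear; then to arbitrary $\mu,\nu\in M^+(K)$ by approximating each in the cone topology by finitely supported positive measures (partition a compact set carrying most of the mass of $\mu$ into small Borel cells, place point masses at sample points, and correct the total mass with one extra point mass), invoking positive continuity of $*$ from \textbf{(A2)} together with the continuity of $\Gamma_\phi$ just proved to pass to the limit in $\Gamma_\phi(\mu_i*\nu_j)=\Gamma_\phi(\mu_i)*\Gamma_\phi(\nu_j)$; and finally to complex $\mu,\nu$ via Jordan decomposition and bilinearity. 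Alternatively, if the integral representation $\mu*\nu=\int_K\int_K p_x*p_y\,d\mu(x)\,d\nu(y)$ is taken as available from the construction of the convolution in \cite{JE,CB}, multiplicativity drops out in one line by substituting the point-mass identity and recognizing $\Gamma_\phi(\mu)\otimes\Gamma_\phi(\nu)$ as the push-forward of $\mu\otimes\nu$ under $\phi\times\phi$.

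The step I expect to be the real obstacle is the continuity claim, and specifically the point that $f\circ\phi$ need not be compactly supported: the naive "test against $C_c$" reasoning fails, and one must upgrade cone-convergence to convergence against all of $C_b$. Everything else is either immediate from the definition of push-forward or routine density-and-continuity bookkeeping.
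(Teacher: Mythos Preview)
Your proof is correct and follows essentially the same route as the paper: push-forward (which the paper phrases as the adjoint of $f\mapsto f\circ\phi$ on $C_0$), verification of the homomorphism identity on point masses directly from the definition of $\phi$, and extension to all of $M(K)$ via density of finitely supported positive measures together with positive continuity of $*$. If anything you are more careful than the paper on the continuity step, correctly flagging and resolving the issue that $f\circ\phi$ need not lie in $C_c(K)$ and upgrading cone-convergence to convergence against $C_b$, whereas the paper simply asserts continuity ``by construction''.
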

	\begin{proof}
		\noi Define $T_\phi: C_0(H) \ra C_0(K)$ by
		$$T_\phi f(x) := f(\phi(x))$$
		\noi for each $f\in C_0(H), x\in K$.
		
		\vspace{0.03in}
		
		\noi Then we know that the adjoint operator $T_\phi^*: C_0(K)^* \ra C_0(H)^*$ is given by
		$$T_\phi^* (m)(f) := m(T_\phi f)$$
		\noi for each $m\in C_0(K)^*, f \in C_0(H)$.
		
		\vspace{0.03in}

		\noi We define $\Gamma_\phi:= T_\phi^*:M(K) \ra M(H)$. Hence on point-mass measures $p_x$ for any $x\in K$, we have
		$$\Gamma_\phi(p_x) (f) = f(\phi(x)) = p_{\phi(x)}(f)$$
		\noi for any $f\in C_0(H)$.

		\vspace{0.03in}
		
		\noi Hence we have $\Gamma_\phi$ in the following way. Define
		$$\Gamma_\phi(p_x):=p_{\phi(x)}$$
		\noi and then extend $\Gamma_\phi$ linearly to $M_F^+(K)$.  Since $M_F^+(K)$ is dense in $M^+(K)$ we can extend $\Gamma_\phi$ to $M^+(K)$ and then finally we extend $\Gamma_\phi$ linearly to $M(K)$.
		
		\vspace{0.03in}
		
		\noi Hence by construction, we get that $\Gamma_\phi$ is a positive continuous linear map on $M(K)$. Now to see if it is a homomorphism, it suffices to show that 
		$$\Gamma_\phi(p_x*p_y) = \Gamma_\phi(p_x)*\Gamma_\phi(p_y)$$
		for any $x, y \in K$. 
		
		\vspace{0.03in}

		\noi Now pick any $x, y \in K$. Then $p_x*p_y\in M^+(K)$. Hence there exists a net $\{m_\alpha\}$ in $M_F^+(K)$ such that 
		$$m_\alpha \ra (p_x*p_y) .$$

		%\vspace{0.06in}
		
		\noi For each $\alpha$ let 
		$$m_\alpha = \sum_{i=1}^{k_\alpha}  \ c_i^\alpha p_{x_i^\alpha}$$
		for some $k_\alpha \in \mathbb{N}, c_i^\alpha \in \mathbb{F}, x_i^\alpha \in K$. Thus we have
		\begin{eqnarray*}
			\Gamma_\phi(p_x*p_y) &=& \Gamma_\phi(\lim_\alpha m_\alpha)\\
			&=& \lim_\alpha \Gamma_\phi\big{(}\sum_{i=1}^{k_\alpha}  \ c_i^\alpha p_{x_i^\alpha}\big{)}\\
			&=& \lim_\alpha \sum_{i=1}^{k_\alpha}  \ c_i^\alpha \Gamma_\phi(p_{x_i^\alpha})\\
			&=& \lim_\alpha \sum_{i=1}^{k_\alpha}  \ c_i^\alpha p_{\phi(x_i^\alpha)} = \lim_\alpha \ \tilde{m}_\alpha
		\end{eqnarray*}
		\noi  where for each $\alpha$ we set 
		$$\tilde{m}_\alpha := \sum_{i=1}^{k_\alpha}  \ c_i^\alpha p_{\phi(x_i^\alpha)}  .$$
		\noi Now pick any $f\in C_0(H)$. We have
		\begin{eqnarray*}
			(p_{\phi(x)}*p_{\phi(y)})(f) &=& \int_H f \ d(p_{\phi(x)}*p_{\phi(y)})\\
			&=& f(\phi(x) * \phi(y))\\
			&=& (f \circ \phi)(x*y)\\
			&=& \int_K (f \circ \phi) \ d(p_x*p_y)\\
			&=& \int_K (f \circ \phi) \ d(\lim_\alpha m_\alpha) = \lim_\alpha \int_K (f \circ \phi) \ d(m_\alpha)
		\end{eqnarray*}
		\noi where the third equality holds since $\phi$ is a homomorphism.  Thus we have that
		\begin{eqnarray*}
			(p_{\phi(x)}*p_{\phi(y)})(f) &=& \lim_\alpha \sum_{i=1}^{k_\alpha} c_i^\alpha \int_K (f \circ \phi) \ d(p_{x_i^\alpha})\\
			&=& \lim_\alpha \sum_{i=1}^{k_\alpha} c_i^\alpha f(\phi(x_i^\alpha))\\
			&=& \lim_\alpha \sum_{i=1}^{k_\alpha} c_i^\alpha p_{\phi(x_i^\alpha)} (f) = \lim_\alpha \tilde{m}_\alpha(f)  .
		\end{eqnarray*}
		
		\noi Since $f\in C_0(H)$ was chosen arbitrarily, it follows that 
		$$\lim_\alpha \tilde{m}_\alpha =  p_{\phi(x)}*p_{\phi(y)}  .$$
		Hence for any $x, y \in K$ we have that 
		\begin{eqnarray*}
			\Gamma_\phi(p_x*p_y) &=& p_{\phi(x)}*p_{\phi(y)} \\
			&=& \Gamma_\phi(p_x)*\Gamma_\phi(p_y)  .
		\end{eqnarray*} 
		\qed
	\end{proof}

	%\vspace{0.06in}
	
	\noi Now we define the free product of a family of (semitopological) semihypergroups, following the standard norms of defining a free product on a category of objects. Note that the definition follows the definition of free product for topological groups \cite[Definition 2.1]{MO}. The first two structural conditions essentially remain the same as in the case of topological groups, whereas the universal property is naturally defined in terms of measure spaces, generalising the product to semihypergroups.

	%\vspace{0.2in}
	
	\begin{definition} \label{deffp}
		Let $\{K_\alpha\}$ be a family of semihypergroups. Then a semihypergroup $F$ is called the free (topological) product of $\{K_\alpha\}$, if the following conditions are satisfied:
		
		\begin{enumerate}
			
			\item For each $\alpha$, $K_\alpha$ is a sub-semihypergroup of $F$.
			
			\item $F$ is algebraically generated by $\cup_\alpha K_\alpha$ (in terms of finite reduced words).
			
			\item Given any semihypergroup $H$, if we have a continuous homomorphism $\phi_\alpha: K_\alpha \ra H$ for each $\alpha$, then there exists a unique positive continuous homomorphism $\Gamma: M(F) \ra M(H)$ such that $\Gamma|_{M(K_\alpha)} = \Gamma_{\phi_\alpha}$ for each $\alpha$.
		\end{enumerate}

		%\vspace{0.06in}

		\noi We denote $F$ as $\prod^*_\alpha K_\alpha \ $.

	\end{definition}
	
	%\vspace{0.06in}
	
	\noi Now before we prove that such products exists for all the significant examples discussed in the third section, let us first define a specific type of semihypergroups.

	%\vspace{0.06in}
	
	\begin{definition}
		Let $(K, *)$ be a (semitopological) semihypergroup. We say that $K$ is a \textbf{pure semihypergroup} if for any $x, y \in K\setminus\{e\}$ we have that $supp(p_x*p_y)$ is not singleton whenever $e \in supp(p_x*p_y)$. Here $e$ is the identity of $K$. 
		
		%\vspace{0.06in}
		
		\noi Equivalently, we can say that $K$ is a \textbf{pure semihypergroup} if we have that $p_x * p_y \neq p_e$ for any $x, y \in K \setminus \{e\}$.
	\end{definition}

	%\vspace{0.06in}
	
	\noi Note that if a semihypergroup does not have an identity, then the above conditions are vacuously true and so $K$ is a pure semihypergroup. Also, note that all the non-trivial examples discussed in Examples  \ref{ex2}, \ref{ex3} and \ref{ex4} are pure semihypergroups. 
	
	\noi However, locally compact groups clearly fail to be a pure semihypergroup. In fact, a locally compact semigroup $(S, \cdot)$ with identity $e$ can be seen as a pure semihypergroup (Example \ref{extr}) only if $x.y \neq e$ for any $x, y \in S\setminus \{e\}$, \textit{i.e,} if none of the elements in $S$ have a left or right inverse. 
	
	%\vspace{0.06in}
	
	\begin{theorem}
		Let $\{K_\alpha\}$ be a family of semihypergroups such that if more than one of the $K_\alpha$'s has an identity element, then all the $K_\alpha$'s are pure semihypergroups. Then there exists a semihypergroup $F$ such that $F= \prod^*_\alpha K_\alpha$.
		
		\vspace{0.03in}
		
		\noindent Moreover, if more than one of the $K_\alpha$'s has an identity element, then $F$ is also a semihypergroup with identity.
	\end{theorem}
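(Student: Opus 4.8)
Here is the proof strategy I would follow.

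\medskip

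The plan is to construct the free product by hand: take as the underlying set of $F$ the set of reduced words over $\{K_\alpha\}$ (together with the empty word), topologise it stratum by stratum from the product topologies, equip it with a convolution defined recursively through cancellation, and then verify axioms (A1)--(A4) of Definition~\ref{shyper} together with the three clauses of Definition~\ref{deffp}. Concretely, let $F$ be the set of all reduced words $a_1a_2\ldots a_n$ on $\{K_\alpha\}$ together with the empty word; whenever a factor $K_\alpha$ has an identity $1_{K_\alpha}$ we identify $1_{K_\alpha}$ with the empty word, so that all existing identities collapse to one distinguished element $e\in F$. The map $a\mapsto(a)$ embeds each $K_\alpha$ into $F$ and every reduced word is the concatenation of its one-letter subwords, so clauses (1) and (2) of Definition~\ref{deffp} hold once the topology and convolution are in place and compatible with these embeddings. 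For the topology, the reduced words of a fixed type $\tau=(\alpha_1,\ldots,\alpha_n)$ are in bijection with $\prod_{i=1}^{n}\bigl(K_{\alpha_i}\setminus\{1_{K_{\alpha_i}}\}\bigr)$; I would transport the product topology across this bijection and glue the strata along the reduction maps --- those that delete a letter which has run into the identity of its factor and splice the two neighbours (re-reducing if they now lie in the same factor). Equivalently, $F$ may be given the quotient topology induced by the reduction map from the open set of admissible tuples in $\bigsqcup_{\tau}\prod_i K_{\alpha_i}$; one then checks that $F$ is locally compact Hausdorff and that each $K_\alpha\hookrightarrow F$ is a topological embedding, which settles the topological content of (A1).

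\medskip

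Next I would define the convolution on point masses by induction on total length. If $u=a_1\cdots a_n$ and $v=b_1\cdots b_m$ with $a_n\in K_\gamma$, $b_1\in K_\delta$, put $p_u*p_v:=p_{a_1\cdots a_n b_1\cdots b_m}$ when $\gamma\neq\delta$, and
$$ p_u*p_v:=\int_{K_\gamma}\Phi(t)\,d\bigl(p_{a_n}*_{K_\gamma}p_{b_1}\bigr)(t),\qquad \Phi(t):=\begin{cases} p_{a_1\cdots a_{n-1}\,t\,b_2\cdots b_m}, & t\neq 1_{K_\gamma},\\[4pt] p_{(a_1\cdots a_{n-1})}*p_{(b_2\cdots b_m)}, & t=1_{K_\gamma}, \end{cases} $$
when $\gamma=\delta$, the case $t=1_{K_\gamma}$ invoking the inductive hypothesis for strictly shorter words; then extend $*$ bilinearly to the finitely supported positive measures, to $M^+(F)$ by density, and finally to $M(F)$. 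That $p_x*p_y$ is a compactly supported probability measure and that $*$ is positive are immediate from the recursion, and the remaining algebraic axiom --- associativity of $*$ on $M(F)$ --- reduces by bilinearity and density to the identities $p_u*(p_v*p_w)=(p_u*p_v)*p_w$ on reduced words, which I would prove by a bookkeeping induction on lengths from the associativity of each $*_{K_\alpha}$ together with the ``associativity'' of reduction. This completes axiom (A1) and gives the positivity half of (A2).

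\medskip

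The heart of the matter, and the step I expect to be the main obstacle, is the continuity of $*$ on $M^+(F)$ (axiom (A2)) and of $(x,y)\mapsto\operatorname{supp}(p_x*p_y)$ into $\mathfrak{C}(F)$ (axiom (A4)). Away from cancellations these are routine, being assembled from concatenation and from the corresponding data on the individual $K_\alpha$. In a cancellation ($\gamma=\delta$) one must track $p_u*p_v$ as the letters move and, critically, as the inserted letter $t\in K_\gamma$ runs into $1_{K_\gamma}$, at which point the word $a_1\cdots a_{n-1}\,t\,b_2\cdots b_m$ has to pass continuously to the shorter word $a_1\cdots a_{n-1}b_2\cdots b_m$. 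This is exactly where the purity hypothesis enters: if some $K_\gamma$ were not pure, a product $p_{a_n}*_{K_\gamma}p_{b_1}$ of non-identity letters of reduced words could equal $p_{1_{K_\gamma}}$, and then $p_u*p_v$ would collapse onto a strictly shorter word while arbitrarily nearby products stayed at full length, so that both $*$ and the support map would jump. Purity forbids $p_{a_n}*p_{b_1}=p_{1_{K_\gamma}}$ for $a_n,b_1\neq 1_{K_\gamma}$, which is precisely what keeps the reduction maps --- hence $*$ and $\operatorname{supp}(\,\cdot\,)$ --- continuous; carrying this out carefully, stratum by stratum and case by case on which (if any) cancellations occur, is the bulk of the work. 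The hypothesis is arranged so that purity is available exactly when it is needed, namely when at least two factors have identities; when at most one factor has an identity no purity is needed, since the analogous collapse cannot cascade.

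\medskip

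Finally I would establish the universal property. Given a semihypergroup $H$ and continuous homomorphisms $\phi_\alpha\colon K_\alpha\to H$, Theorem~\ref{homtopc} supplies positive continuous homomorphisms $\Gamma_{\phi_\alpha}\colon M(K_\alpha)\to M(H)$. Define $\Gamma$ on a reduced word $w=a_1\cdots a_n$ (with $a_i\in K_{\alpha_i}$) by $\Gamma(p_w):=p_{\phi_{\alpha_1}(a_1)}*\cdots*p_{\phi_{\alpha_n}(a_n)}$ and extend by linearity and density to $M(F)$; then $\Gamma$ is positive and continuous by construction and restricts to each $\Gamma_{\phi_\alpha}$. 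Multiplicativity $\Gamma(p_u*p_v)=\Gamma(p_u)*\Gamma(p_v)$ follows by unwinding the recursive definition of $*$, using Theorem~\ref{homtopc} (so that $p_{\phi_\gamma(a_n)}*p_{\phi_\gamma(b_1)}=\Gamma_{\phi_\gamma}(p_{a_n}*p_{b_1})$), the bilinearity and continuity of $*$ in $M(H)$, and the inductive hypothesis for shorter words. Since $p_w=p_{a_1}*\cdots*p_{a_n}$ in $F$, any positive continuous homomorphism $M(F)\to M(H)$ restricting to the $\Gamma_{\phi_\alpha}$ must agree with $\Gamma$ on all finitely supported measures and hence, by continuity, everywhere; this gives uniqueness, so $F=\prod^*_\alpha K_\alpha$. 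For the last assertion, when at least two factors have identities they have all been amalgamated to the single element $e\in F$, and $p_e*p_w=p_w*p_e=p_w$ for every word $w$ straight from the definition of $*$; hence $e$ is an identity for $(M(F),*)$ and $F$ is a semihypergroup with identity.
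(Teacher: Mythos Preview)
Your proposal follows the same overall architecture as the paper's proof: build $F$ as the set of reduced words with the existing identities amalgamated to a single element $e$, equip it with a topology assembled from the factor topologies, define the convolution explicitly on point masses and extend, verify axioms (A1)--(A4), and then deduce the universal property by invoking Theorem~\ref{homtopc} on each factor and checking multiplicativity and uniqueness exactly as you describe. The universal-property portion and the uniqueness argument are essentially identical to the paper's.

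There is, however, one genuine difference in the construction of the convolution. The paper does \emph{not} define $p_u*p_v$ recursively: when $a_n,b_1$ lie in the same factor $K_\gamma$, it simply transplants the measure $p_{a_n}*_{K_\gamma}p_{b_1}$ into the slot between $(a_1\cdots a_{n-1})$ and $(b_2\cdots b_m)$ via an explicit formula on Borel sets, distinguishing only the two sub-cases $\alpha_{n-1}\neq\beta_2$ and $\alpha_{n-1}=\beta_2$, and stops there; no further cancellation is ever performed. Your recursive definition, by contrast, feeds whatever mass $p_{a_n}*p_{b_1}$ places on the identity back into a convolution of the shorter words $(a_1\cdots a_{n-1})$ and $(b_2\cdots b_m)$, allowing cascades. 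This buys you a cleaner and more explicit account of where purity is used (it forbids $p_{a_n}*p_{b_1}=p_e$, i.e.\ a total collapse of the support to a lower stratum, which is exactly what would make the support map jump), at the price of a more delicate induction for associativity and for the continuity checks (A2) and (A4). The paper's one-step definition, conversely, makes associativity and the Michael-topology continuity almost immediate --- both reduce directly to the corresponding properties in a single factor --- but leaves the role of purity and the handling of the degenerate case $\alpha_{n-1}=\beta_2$ with $e\in\mathrm{supp}(p_{a_n}*p_{b_1})$ less transparent. A minor further difference: when exactly one factor has an identity, the paper keeps that identity as an ordinary letter (renamed $x_e$), so that no empty word appears and no cancellation ever arises, whereas you still amalgamate it to $e$; both choices satisfy Definition~\ref{deffp} and hence yield isomorphic objects. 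Likewise, your description of the topology via strata glued along reduction maps is packaged differently from the paper's explicit basis $\{U_1U_2\cdots U_n\}$, but the two give the same topology.
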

	\begin{proof}
		Set $\Lambda := \{\alpha : K_\alpha \mbox{ \ has an identity element}\}$. If $|\Lambda| \leq 1$, set
		$$ K := \{ a_1 a_2 \ldots, a_n : n\in \mathbb{N}, a_1 a_2\ldots a_n \mbox{ \ is a reduced word on } \{K_\alpha\}\} .$$
		Note that if $|\Lambda|=1$, then we treat the identity as any other arbitrary element in that semihypergroup. For convenience of notations, we rename the idenity element as $x_e$ in this case, so that it can be included in finite sequences of reduced words.
		
		\vspace{0.03in}

		\noi If $|\Lambda|>1$, let $e$ denote the empty word on $\{K_\alpha\}$ and set $e= 1_{K_\alpha}$ for each $\alpha\in \Lambda$ where $1_{K_\alpha}$ denotes the identity element of $K_\alpha$. Now set
		$$ K := \{e\} \ \cup \  \{ a_1 a_2\ldots a_n : n\in \mathbb{N}, \ a_1 a_2\ldots a_n \mbox{ is a reduced word on } \{K_\alpha\}\}  .$$
		\noi For any $x_1 x_2\ldots x_n$, $y_1 y_2\ldots y_m \in K$ where $x_i \in K_{\alpha_i}, y_j \in K_{\beta_j}$ for $i=1, 2, \ldots, n$; $j=1, 2, \ldots, m$ and any subset $A \subset K_{\alpha_0}$ where $\alpha_0 \neq \alpha_n , \beta_1$ we define that:
		$$(x_1 \ldots x_n) A (y_1 \ldots y_m) := \begin{cases}
		\{x_1 x_2 \ldots x_n a  y_1 y_2 \ldots y_m : a\in A\}, &  \hspace{-0.12in}  \mbox{ if }  \ e \in \hspace{-0.12in} / \ A  .\\
		& \\
		\{(x_1 x_2 \ldots x_n y_1 y_2 \ldots y_m)\} \cap K  & \\
		\cup \{(x_1 x_2 \ldots x_n a  y_1 y_2 \ldots y_m) : a\in A, a\neq e\}, &    \hspace{-0.12in}  \mbox{  if } \ e \in A  .\\
		\end{cases}$$
		%On the other hand if $e\in A$ we define that
		%\begin{eqnarray*}
		%(x_1 x_2\ldots x_n)A(y_1 y_2\ldots y_m) &:=& \{(x_1 x_2 \ldots x_n y_1 y_2 \ldots y_m)\} \cap K \\
		%&& \cup \{(x_1 x_2 \ldots x_n a  y_1 y_2 \ldots y_m) : a\in K_{\alpha_0}, a\neq e\}.
		%\end{eqnarray*}
		\noi Now for any two sets $A_1\subset K_{\alpha_1}$, $A_2\subset K_{\alpha_2}$ where $\alpha_1 \neq \alpha_2$ define
		\begin{eqnarray*}
			A_1A_2 &:=& \{a_1 a_2: a_1 \in A_1, a_2 \in A_2\}, \mbox{ if } e\in \hspace{-0.11in} / \  A_1, A_2 . \\
			A_1A_2 &:=& \{a_1 a_2: a_1 \in A_1\setminus \{e\}, a_2 \in A_2\} \cup \{a_2 : a_2 \in A_2\}, \mbox{ if } e \in A_1, e \in \hspace{-0.11in} / \ A_2  .\\
			A_1A_2 &:=& \{a_1 a_2: a_1 \in A_1, a_2 \in A_2 \setminus \{e\}\} \cup \{a_1 : a_1 \in A_1\}, \mbox{ if } e \in \hspace{-0.11in} / \ A_1, e \in A_2  .\\
			A_1A_2 &:=& \{a_1 a_2: a_1 \in A_1 \setminus \{e\}, a_2 \in A_2 \setminus \{e\}\} \cup \{a_1 : a_1 \in A_1 \setminus \{e\}\}\\ && \cup \{a_2 : a_2 \in A_2 \setminus \{e\}\} \cup \{e\}, \mbox{ if } e \in  A_1,  A_2  .
		\end{eqnarray*}
		\noi Similarly for $A_1\subset K_{\alpha_1}$, $A_2\subset K_{\alpha_2}$, $A_3\subset K_{\alpha_3}$ where $\alpha_1 \neq \alpha_2 \neq \alpha_3$, we define
		\begin{eqnarray*}
			A_1A_2A_3 &:=& \{a_1 a_2 a_3: a_1 \in A_1, a_2 \in A_2, a_3 \in A_3\}, \mbox{ if } e\in \hspace{-0.11in} / \  A_1, A_2, A_3  .\\
			A_1A_2A_3 &:= & \{a_1 a_2 a_3: a_1 \in A_1\setminus \{e\}, a_2 \in A_2, a_3 \in A_3 \}\\
			&& \cup \{a_2 a_3 : a_2 \in A_2, a_3\in A_3\}, \mbox{ if } e \in A_1, \ e \in \hspace{-0.11in} / \ A_2, A_3  .\\
			%A_1A_2A_3 &:=& \{a_1 a_2 a_3: a_1 \in A_1, a_2 \in A_2 \setminus \{e\}, a_3 \in A_3 \} \\
			%&& \cup \{a_1, a_3 : a_1 \in A_1, a_3\in A_3\}, \mbox{ if } e \in \hspace{-0.11in} / \ A_1, A_3, e \in A_2.\\
			%A_1A_2A_3 &:=& \{a_1 a_2 a_3 : a_1 \in A_1, a_2 \in A_2, a_3 \in A_3 \setminus \{e\} \} \\
			%&&\cup \{a_1 a_2 : a_1 \in A_1, a_2\in A_2\}, \mbox{ if } e \in \hspace{-0.11in} / \ A_1, A_2, \  e \in A_3.\\
			A_1A_2A_3 &:=& \{a_1 a_2 a_3: a_1 \in A_1\setminus \{e\}, a_2 \in A_2 \setminus \{e\}, a_3 \in A_3\} \\
			&& \cup \{a_2 a_3 : a_2 \in A_2\setminus \{e\}, a_3\in A_3\} \\
			&& \cup \{a_1 a_3: a_1 \in A_1\setminus \{e\}, a_3\in A_3\} \cup \{a_3: a_3\in A_3\}, \\
			&& \mbox{ if } e \in A_1, A_2, \ e \in \hspace{-0.11in} / \  A_3  .\\
			%A_1A_2A_3 &:=& \{a_1 a_2 a_3: a_1 \in A_1\setminus \{e\}, a_2 \in A_2, a_3 \in A_3\setminus \{e\} \} \cup \{a_2 a_3 : a_2 \in A_2, \\
			%&& a_3\in A_3\setminus \{e\}\} \cup \{a_1 a_2: a_1 \in A_1\setminus \{e\}, a_2\in A_2\} \cup \{a_2: a_2\in A_2\}, \\
			%&& \mbox{ if } e \in A_1, A_3, e \in \hspace{-0.11in} / \  A_2.\\
			%A_1A_2A_3 &:=& \{a_1 a_2 a_3: a_1 \in A_1, a_2 \in A_2\setminus \{e\}, a_3 \in A_3\setminus \{e\} \} \cup \{a_1 a_2 : a_1 \in A_1, \\
			%&& a_2\in A_2\setminus \{e\}\} \cup \{a_1 a_3: a_1 \in A_1, a_3\in A_3\setminus \{e\}\} \cup \{a_1: a_1\in A_1\}, \\
			%&& \mbox{ if } e \in A_2, A_3; e \in \hspace{-0.11in} / \  A_1.\\
			A_1A_2A_3 &:=& \{a_1 a_2 a_3: a_1 \in A_1 \setminus \{e\}, a_2 \in A_2 \setminus \{e\}, a_3\in A_3\setminus \{e\}\} \\
			&& \cup \{a_1 a_2 : a_1 \in A_1 \setminus \{e\}, a_2\in A_2\setminus \{e\}\} \cup \{a_2, a_3 : a_2 \in A_2 \setminus \{e\},\\
			&& a_3\in A_3\setminus \{e\}\}  \cup \{a_1, a_3 : a_1 \in A_1 \setminus \{e\}, a_3\in A_3\setminus \{e\}\} \\
			&& \cup \{a_1: a_1\in A_1\setminus \{e\}\} \cup \{a_2: a_2\in A_2\setminus \{e\}\} \\
			&& \cup \{a_3: a_3\in A_3\setminus \{e\}\} \cup \{e\}, \mbox{ if } e \in  A_1,  A_2, A_3  .
		\end{eqnarray*}
		
		%\vspace{0.06in}
		
		\noi We can get the other cases following the same pattern. 
		
		\noi In general for any $n \in \mathbb{N}$, consider subsets $A_i \subset K_{\alpha_i}$ for $i=1, 2, \ldots, n$, where $\alpha_i \neq \alpha_{i+1}$ for $i= 1, 2, \ldots, (n-1)$. Let $e \in A_{i_k}$ for $k= 1, 2, \ldots, m$  where $i_{k_1}<i_{k_2}$ for $k_1<k_2$ and $e \in \hspace{-0.11in} / \ A_i$ whenever $i \neq i_k$ for any $k= 1, 2, \ldots, m$. 
		
		\vspace{0.03in}
		
		\noi Then continuing in the same manner as above, we get that
		$$ A_1A_2\ldots A_n := \begin{cases}
		\{ a_1 a_2\ldots a_n \in K : a_i \in A_i, i = 1, 2, \ldots, n\} & \\
		\cup \ ( F_n \cap K ) \cup \{e\} \ , & \mbox{ if } m=n \\
		&\\
		\{ a_1 a_2\ldots a_n \in K : a_i \in A_i, i = 1, 2, \ldots, n\} &\\
		\cup \ ( F_n \cap K ) \ , & \mbox{ if } m\neq n\\
		
	\end{cases}$$
	\noi where $F_n$ is defined as the following:
	\begin{equation*}
	\begin{split}
	F_n := & \bigcup_{l=1}^m \bigcup_{\small{\substack{\hspace{0.05in}1\leq k_1<k_2<\ldots\\ \ldots<k_l\leq m}}} \hspace{-0.05in} \Big{\{ }  (a_1 a_2 \ldots a_{{i_{k_1}-1}} a_{{i_{k_1}}+1} \ldots a_{i_{k_2}-1} a_{{i_{k_2}}+1} \ldots a_{i_{k_l}-1} a_{i_{k_l}+1} \ldots \\[-3.5ex]
	&  \hspace{2.1in} \ldots a_{n-1}a_n) : a_j \in A_j, j= 1, 2, \ldots, n \Big{\}}  .
	\end{split}
	\end{equation*}
	\noi Note that from now on unless otherwise mentioned, whenever we write a product $A_1A_2\ldots A_n$ of subsets $A_i \subset K_{\alpha_i}$, we assume that $\alpha_i \neq \alpha_{i+1}$ for $i=1, 2, \ldots, (n-1)$. Also it follows immediately from the above construction that for any such sets $A_1, A_2, \ldots, A_n$ we have
	$$ A_1A_2\ldots A_n = A_1(A_2A_3\ldots A_n) = A_1(A_2A_3\ldots A_{n-1})A_n = (A_1A_2\ldots A_{n-1})A_n  .$$

	%\vspace{0.2in}
	
	\paragraph{\textbf{Natural Topology on $K$:}}
	
	\hspace{0.03in}
	
	\noi Now we first introduce a certain topology $\tau$ on $K$. Afterwards we will define a binary operation `$*$' on the measure space $M(K)$  and then investigate how the operation gives rise to an associative algebra on $M(K)$ and how the Borel sets act under the said operation and the cone topology on $M^+(K)$ induced by $\tau$. First, we set 
	$$\mathcal{B}:= \{U_1U_2, \ldots, U_n : U_i \mbox{ is an open subset of } K_{\alpha_i}, i=1,2, \ldots n;  n\in \mathbb{N}\}  .$$
	\noi We need to show that $\mathcal{B}$ serves as a base for a topology on $K$. Pick any element $x_1 x_2\ldots x_n \in K\setminus \{e\}$ where $x_i \in K_{\alpha_i}$, $i=1,2, \ldots, n$. Since $x_1 x_2\ldots x_n$ is a reduced word, $x_i \neq 1_{K_{\alpha_i}}$, even if $\alpha_i \in \Lambda$. Thus we can find an open neighbourhood $V_{x_i}$ of $x_i$ in $K_{\alpha_i}$ such that $e\in \hspace{-0.11in} / \ V_{x_i}$. Then $x_1 x_2\ldots x_n \in V_{x_1}V_{x_2}\ldots V_{x_n} \in \mathcal{B} \ $. 
	
	\vspace{0.05in}
	
	\noi Also note that if $e\in K$, then for any open neighborhood $U$ of $e=1_{K_\alpha}$ in $K_\alpha$ where $\alpha \in \Lambda$, we have that $e\in U \in \mathcal{B}$. Now let $U_1U_2\ldots U_n, V_1V_2\ldots V_m \in \mathcal{B}$ where $U_i\subset K_{\alpha_i}$ and $V_j \subset K_{\beta_j}$ for $i=1, 2, \ldots n$; $j=1, 2, \ldots, m$. Pick any element $x_1 x_2 \ldots x_l \in U_1U_2\ldots U_n \cap V_1V_2\ldots V_m$. Note that $l \leq \min({n, m})$ and hence for each $k=1, 2, \ldots l$, there exists some $i_k \in \{1, 2, \ldots, n\}$ and $j_k \in \{1, 2, \ldots, m\}$ such that $i_1 < i_2 < \ldots < i_l$, \ $j_1 < j_2 < \ldots < j_l$ and $x_k \in U_{i_k} \cap V_{j_k}$ for each $k=1, 2, \ldots, l \ $. 
	
	\vspace{0.05in}
	
	\noi Set $W_k:=U_{i_k} \cap V_{j_k}$ for each $k=1, 2, \ldots, l$. Since each $W_k$ is open in $K_{\alpha_{i_k}}=K_{\beta_{j_k}}$ and $x_k \neq e$ for each $k$, we have that $x_1 x_2 \ldots x_l \in W_1W_2\ldots W_l \in \mathcal{B}$. Now to show that $W_1W_2\ldots W_l \subset U_1U_2\ldots U_n \cap V_1V_2\ldots V_m$, pick any $y_1 y_2 \ldots y_t \in W_1W_2\ldots W_l$. Then for $k = 1, 2, \ldots t$, we have $y_k \in W_{s_k} = U_{i_{s_k}}\cap V_{j_{s_k}}$, for some $s_k\in \{1, 2, \ldots, l\} \ $.
	
	\vspace{0.05in}
	
	\noi Since $x_1 x_2 \ldots x_l \in U_1U_2\ldots U_n$ we must have that $e\in U_i$ whenever $i\in \{1,2, \ldots n\} \setminus \{i_k : k= 1, 2, \ldots, l\}$.  Again since  $y_1 y_2\ldots y_t \in W_1W_2\ldots W_l$, we must have that $e\in W_{s} = U_{i_{s}}\cap V_{j_{s}}\subset U_{i_s}$ whenever $s\in \{1, 2, \ldots, l\}\setminus \{s_k: k=1, 2, \ldots, t\}$. Combining these two statements gives us that $e\in U_i$ whenever $i\in \{1, 2, \ldots, n\} \setminus \{i_{s_k}: k=1, 2, \ldots, t\}$. Also, we know that $y_k \in U_{i_{s_k}}$ for each $k= 1, 2, \ldots, t$. Hence it follows that $y_1 y_2 \ldots y_t \in U_1U_2\ldots U_n \ $.
	
	\vspace{0.05in}
	
	\noi Similarly since $x_1 x_2 \ldots x_l \in V_1V_2\ldots V_m$, we must have that $e\in V_j$ whenever $j\in \{1,2, \ldots m\} \setminus \{j_k : k= 1, 2, \ldots, l\}$ and since $y_1 y_2 \ldots y_t \in W_1W_2\ldots W_l$, we must have that $e\in W_{s} = U_{i_{s}}\cap V_{j_{s}}\subset V_{j_s}$ whenever $s\in \{1, 2, \ldots, l\}\setminus \{s_k: k=1, 2, \ldots, t\}$, we can proceed in the same manner to get that $y_1 y_2 \ldots y_t \in V_1V_2\ldots V_m \ $.
	
	\vspace{0.05in}
	
	\noi Thus finally we see that $x_1 x_2 \ldots x_l \in W_1W_2\ldots W_l \subset \big{(}U_1U_2\ldots U_n \cap V_1V_2\ldots V_m\big{)}$ where $W_1W_2\ldots W_l \in \mathcal{B}$, implying that $\mathcal{B}$ serves as a base for a topology on $K$. We denote the topology on $K$ generated by $\mathcal{B}$ as $\tau$.
	
	\vspace{0.05in}
	
	\paragraph{\textbf{Natural Binary Operation $*$ on $M(K)$:}}
	
	\hspace{0.03in}

	\noi As pointed out in the first section of this text, in order to define a binary operation on $M(K)$ it is sufficient to define a binary operation on the point-mass measures on $M(K)$. First of all, if $\Lambda\neq \varnothing$ then for any $x_1 x_2\ldots x_n\in K$ we define
	$$p_e*p_e=p_e \mbox{ \ \ ; \ \ } \ p_e*p_{x_1 x_2\ldots x_n} = p_{x_1 x_2\ldots x_n} = p_{x_1 x_2\ldots x_n}*p_e  .$$
	
	\noi Now for any two elements $x=x_1 x_2\ldots x_n, y=y_1 y_2\ldots y_m \in K\setminus \{e\}$ where 
	$x_i \in K_{\alpha_i}, y_j \in K_{\beta_j}$ for $i=1, 2, \ldots, n$; \ $j=1, 2, \ldots, m$ and for any Borel set $E' \subseteq K$ we define:
	
	$$(p_x*p_y) (E') = \begin{cases}
	p_{(x_1 \ldots x_n y_1 \ldots y_m)}(E')  , &  \hspace{-0.05in} \mbox{ if } \alpha_n \neq \beta_1.\\
	& \hspace{-0.05in} \\
	(p_{x_n}*p_{y_1})(E)   ,& \hspace{-0.05in} \mbox{  if } \alpha_n = \beta_1, \alpha_{n-1}\neq\beta_2\\
	& \hspace{-0.05in} \mbox{ and } E' \cap \big{(}(x_1 \ldots x_{n-1})K_{\alpha_n}(y_{2} \ldots y_m)\big{)}\\
	& \hspace{-0.05in} \ = (x_1 \ldots x_{n-1})E(y_{2} \ldots y_m).\\
	& \hspace{-0.05in} \\
	(p_{x_n}*p_{y_1})(E\cup\{e\})  , &  \hspace{-0.05in} \mbox{ if } \alpha_n = \beta_1, \alpha_{n-1}=\beta_2\\
	& \hspace{-0.05in} \mbox{ and } E' \cap \big{(}(x_1 \ldots x_{n-1})K_{\alpha_n}(y_{2} \ldots y_m)\big{)}\\
	& \hspace{-0.05in}  \ = (x_1 \ldots x_{n-1})E(y_{2} \ldots y_m) \neq \varnothing.\\
	& \hspace{-0.05in} \\
	0  , & \hspace{-0.05in}  \mbox{  if } \alpha_n = \beta_1, \alpha_{n-1}=\beta_2\\
	& \hspace{-0.05in} \mbox{ and } E' \cap \big{(}(x_1 \ldots x_{n-1})K_{\alpha_n}(y_{2} \ldots y_m)\big{)}\\
	& \hspace{-0.05in} \ =\varnothing.\\
	\end{cases}$$

	%\vspace{0.2in}
	
	\noindent Note that when $\alpha_n = \beta_1$, then $(p_x*p_y)$ mimics the behaviour of the probability measure $(p_{x_{n}}*p_{y_{1}})$ and hence is a probability measure with support $(x_1 \ldots x_{n-1})\mbox{ supp}(p_{x_{n}}*p_{y_{1}})(y_{2} \ldots y_m)$. 
	
	%\vspace{0.05in}
	
	\paragraph{\textbf{$(p_x*p_y)$ has compact support:}}
	
	\hspace{0.03in}
	
	\noi Due to the construction of $\tau$, for any two closed subsets $C_1 \subset K_{\alpha_1}$ and $C_2 \subset K_{\alpha_2}$ where $\alpha_1\neq \alpha_2$, we have that $C_1C_2$ is a closed subset of $K$. This is true since $K_{\alpha_1} K_{\alpha_2}$ is open in $K$ and we have that
	$$K_{\alpha_1}K_{\alpha_2} \setminus C_1C_2 = (K_{\alpha_1}\setminus C_1).K_{\alpha_2} \cup K_{\alpha_1}.(K_{\alpha_2}\setminus C_2),$$
	\noindent which is open in $K_{\alpha_1} K_{\alpha_2}$. Similarly, we can show that $C_1C_2\ldots C_n$ is a closed set in $K$ for any $n\in \mathbb{N}$ where $C_i \in K_{\alpha_i}$ for $1\leq i \leq n$ and $\alpha_i \neq \alpha_{i+1}$ for $1\leq i \leq n-1$. In fact, it is now easy to see that the set $\{C_1C_2\ldots C_n : n\in \mathbb{N}, C_i \mbox{ \ is a closed set in some \ } K_{\alpha_i}\}$ is the family of basic closed sets in $K$.

	\vspace{0.05in}
	
	\noi Now let both $C_1$ and $C_2$ are compact subsets of $K_{\alpha_1}$ and $K_{\alpha_2}$ respectively and let $\{A_\gamma\}_{\gamma\in I}$ be a family of closed subsets of $C_1C_2$ with finite intersection property. Then we must have that $A_\gamma = B_1^\gamma B_2^\gamma$ for some closed subsets $B_i^\gamma \subset C_i \subset K_{\alpha_i}$ for each $\gamma$, $i=1, 2$. Now for any $\{\gamma_1, \gamma_2, \ldots, \gamma_n\} \subset I$ we have that $\cap_1^n B_1^{\gamma_i} B_2^{\gamma_i} = \cap_1^n A_{\gamma_i} \neq \varnothing$.

	\vspace{0.05in}
	
	\noi Hence in particular, $\cap_1^n B_1^{\gamma_i} \neq \varnothing$ for any such finite subset $\{\gamma_1, \gamma_2, \ldots, \gamma_n\} \subset I$. So $\{B_1^\gamma\}_{\gamma\in I}$ is a family of subsets in $C_1$ with the finite intersection property. But $C_1$ is compact in $K_{\alpha_1}$ and hence there exists some $x_1 \in C_1$ such that $x_1 \in \cap_{\gamma\in I} B_1^\gamma$. Similarly, since $C_2$ is also compact in $K_{\alpha_2}$, there exists some $x_2 \in C_2$ such that $x_2 \in \cap_{\gamma\in I} B_2^\gamma$. Thus we see that there exists an element $x \in C_1C_2$ such that $x\in \cap_{\gamma \in I} A_\gamma$ where $x$ is given as the following:
	$$x = \begin{cases}
	(x_1x_2)  , & \mbox{  if } x_1, x_2 \neq e .\\
	(x_1)  , & \mbox{  if } x_1\neq e, x_2 = e  .\\
	(x_2)  , & \mbox{  if } x_1 =e, x_2 \neq e  .\\
	e  , & \mbox{  if } x_1= x_2 = e  .
	\end{cases}$$
	\noi Thus we see that $C_1C_2$ has to be compact too. Now let $C_1,C_2, C_3$ be compact subsets of $K_{\alpha_1},K_{\alpha_2}, K_{\alpha_3}$ respectively. Note that if $\alpha_i \neq \alpha_j$ for any $i, j \in \{1, 2, 3\}$ or if $e\not\in  \ \cap_{i=1}^3 C_i$ then we can proceed similarly as above to show that $C_1C_2C_3$ gives us a compact set in $K$. Otherwise, first consider the following case.
	
	\vspace{0.05in}

	\noi Let $C_0:=(a_1\ldots a_s)C(b_1 \ldots b_t)$, where $C$ is a compact set in $K_{\alpha_0}$ and $ (a_1a_2\ldots a_s),\\ (b_1 b_2 \ldots b_t) \in K \setminus \{e\}$ where $a_i\in K_{\gamma_i}$ and $b_j\in K_{\delta_j}$ for $1\leq i \leq s, 1\leq j \leq t$ such that $\alpha_0 \neq \gamma_s, \delta_1$. We claim that any such $C_0$ is compact in $(K, \tau)$. 
	
	\vspace{0.05in}
	
	\noi Let $\{U_\alpha\}$ be any basic open cover of $C_0$. Hence $U_\alpha = U_1^\alpha U_2^\alpha\ldots U_{n_\alpha}^\alpha$ for some $n_\alpha\in \mathbb{N}$ where each $U_k^\alpha$ is an open set in some $K_\alpha$ such that the product exists. Now for each $x \in C\setminus \{e\}$ there exists $\alpha_x$ such that $(a_1\ldots a_s x b_1 \ldots b_t) \in U_{\alpha_x}$. Therefore there exists $i_x\in \{1, 2, \ldots, n_{\alpha_x}\}$ such that $x\in U_{i_x}^{\alpha_x}$ where $U_{i_x}^{\alpha_x}$ is an open set in $K_{\alpha_0}$. Thus $\mathcal{U}:= \{U_{i_x}^{\alpha_x}\}_{x\in C}$ serves an open cover of the set $C\setminus \{e\}$. 
	
	\vspace{0.05in}
	
	\noi If $e \not\in C$, then $\mathcal{U}$ serves as an open cover of $C$ and hence has a finite subcover $\{U_1, U_2, \ldots, U_p\}$ where $U_k = U_{i_{x_k}}^{\alpha_{x_k}}$ for some $x_k \in C$ for $1\leq k \leq p$. Thus $\{U_{\alpha_{x_1}}, U_{\alpha_{x_2}}, \ldots, U_{\alpha_{x_p}}\}$ serves as a finite subcover of $\{U_\alpha\}$.
	
	\vspace{0.05in}
	
	\noi Now let $e\in C$. Pick $\alpha' \in \Lambda$ such that $\alpha' \neq \alpha_0$. Since $K_{\alpha'}$ is locally compact, we can find a compact neighborhood $F$ of $e$ in $K_{\alpha'}$. Let $\mathcal{V}:=\{V_\beta\}$ be an open cover of $F$. Then $C\cup F$ is a compact set in $K_{\alpha_0}\cup K_{\alpha'}$ with open cover $\mathcal{U} \cup \mathcal{V}$. Thus we get a finite subcover of the form $\{U_1, U_2, \ldots, U_p, V_{\beta_1}, V_{\beta_2}, \ldots V_{\beta_q}\}$ where $U_k = U_{i_{x_k}}^{\alpha_{x_k}}$ for some $x_k \in C\setminus \{e\}$ for $1\leq k \leq p$. Note that $e\in V_{\beta_l}$ for some $l\in \{1, 2, \ldots, q\}$ here and hence $\{U_{i_{x_k}}\}_{k=1}^p$ covers $C\setminus \{e\}$. 
	
	\vspace{0.05in}
	
	\noi Thus if $\gamma_s=\delta_1$, then $\{U_{\alpha_{x_1}}, U_{\alpha_{x_2}}, \ldots, U_{\alpha_{x_p}}\}$ covers $C_0$, as desired. Otherwise if $\gamma_s\neq \delta_1$, then there exists some $\alpha'$ such that $(a_1\ldots a_sb_1\ldots b_t) \in U_{\alpha'}$. Hence in this case, $\{U_{\alpha_{x_1}}, U_{\alpha_{x_2}}, \ldots, U_{\alpha_{x_p}}, U_{\alpha'}\}$ serves as a finite subcover of $C_0$. Hence we see that any set of the form $C_0$ is compact in $(K, \tau)$. Using similar argument we can immediately see that sets of the form $C_1C_2C_3$ are also compact where $C_i \in K_{\alpha_i}$ is compact.

	\vspace{0.05in}
	
	\noi In particular, for any two words $x = (x_1x_2\ldots x_n), y = (y_1y_2\ldots y_m)\in K$ where $x_i \in K_{\alpha_i}$, $y_j \in K_{\beta_j}$ and $\alpha_n = \beta_1$, we have that $$\mbox{ supp}(p_x*p_y) = (x_1 \ldots x_{n-1})\mbox{ supp}(p_{x_{n}}*p_{y_{1}})(y_{2} \ldots y_m)$$ is compact since $\mbox{ supp}(p_{x_{n}}*p_{y_{1}})$ is compact in $K_{\alpha_{n}} = K_{\beta_{1}}$. Thus it follows from the construction of convolution products that for any two words $x, y \in K$ we have that $(p_x*p_y)$ is a probability measure with compact support. Also since convolution is associative in each $M(K_\alpha)$ and concatenation of finite words is associative, it follows from the construction that convolution products on $M(K)$ are associative.

	%\vspace{0.05in}
	
	\paragraph{\textbf{$\tau$ is Hausdorff:}}
	
	\hspace{0.03in}

	\noi Now we proceed further to show that $(K, *)$ indeed gives us a semihypergroup. First note that since each $K_\alpha$ is Hausdorff, for any two elements $x_1 x_2\ldots x_n, y_1 y_2\ldots y_m \in K$ where $x_i \in K_{\alpha_i}$, $y_j \in K_{\beta_j}$, we can find sets $A_i\subset K_{\alpha_i}$, $B_j \subset K_{\beta_j}$ for $1\leq i\leq n$, $1\leq j \leq m$ such that $x_i \in A_i$, $y_j \in B_j$ and $A_i \cap B_j = \varnothing$ for all $i, j$. Hence we have that $x_1 x_2\ldots x_n\in A_1A_2\ldots A_n, \ y_1 y_2\ldots y_m\in B_1B_2\ldots B_m$ where $A_1A_2\ldots A_n \cap B_1B_2\ldots B_m = \varnothing $. Also if $e\in K$, then for any such $x_1 x_2\ldots x_n$ since $x_i \neq e$ for any $i$, we can choose $A_i$'s such that $e\in \hspace{-0.11in} / \ A_i$  for any $i$. Then again using the fact that each $K_\alpha$ is Hausdorff, we can choose a neighborhood $V$ around $e$  such that $V\cap A_i = \varnothing$ for all $i$ and hence $V \cap A_1A_2\ldots A_n = \varnothing$. Thus we can easily see that the topology $\tau$ on $K$ is Hausdorff.

	%\vspace{0.05in}
	
	\paragraph{\textbf{$\tau$ is locally compact:}}
	
	\hspace{0.03in}
	
	\noi Next pick any element $x_1 x_2\ldots x_n \in K$ where $x_i \in K_{\alpha_i}$ for $1\leq i\leq n$. Since each $K_{\alpha_i}$  is locally compact and Hausdorff, we can find a compact neighborhood $C_i$ of $x_i$ in $K_{\alpha_i}$ such that $e \in \hspace{-0.11in} / \ C_i$. Let $U_i$ be an open set in $K_{\alpha_i}$ such that $C_i \subset U_i$ and $e \in \hspace{-0.11in} / \ U_i$. Now consider the map $\phi : U_1 \times U_2 \times \ldots \times U_n \ra U_1U_2\ldots U_n$ given by
	$$ \phi((a_1, a_2, \ldots, a_n)) := (a_1a_2\ldots a_n) $$
	for any  $(a_1, a_2, \ldots, a_n) \in U_1 \times U_2 \times \ldots \times U_n \ $.

	\vspace{0.05in}

	\noi Note that since $e \in \hspace{-0.11in} / \ U_i$ for any $i$, the map $\phi$ is bijective. Moreover, for any basic open set $V_1V_2\ldots V_n \subset U_1U_1\ldots U_n$ we have $\phi^{-1}(V_1V_2\ldots V_n) = V_1\times V_2\times \ldots \times V_n$ which is open in $U_1 \times U_2 \times \ldots \times U_n$ in the product topology. Hence $\phi$ is continuous, granting us that $C_1C_2\ldots C_n = \phi(C_1\times C_2 \times \ldots \times C_n)$ is a compact neighborhood of $x_1x_2\ldots x_n$ in $(K, \tau)$. Also if $e\in K$ then there exists some $\alpha_0$ such that $e\in K_{\alpha_0}$. Since $K_{\alpha_0}$ is locally compact we can always find a compact neighborhood for $e$. Thus we see that $(K, \tau)$ is a locally compact Hausdorff space.

	%\vspace{0.05in}
	
	\paragraph{\textbf{Continuity of support with respect to Michael Topology:}}
	
	\hspace{0.03in}
	
	\noi Now in order to see if the map $(x, y) \mapsto supp (p_x*p_y) : K\times K \ra \mathfrak{C}(K)$ is continuous, first recall \cite{JE} that the map $\pi: X \ra M(X)$ given by $x \mapsto p_x$ is a homemorphism onto its image for any locally compact Hausdorff space $X$. Now let $\{(x_\alpha, y_\alpha)\}$ be a net in $K\times K$ that converges to some $(x, y)$ where $x= x_1x_2\ldots x_n$, $y=y_1y_2\ldots y_m \in K$ such that $x_i \in K_{\gamma_i}$ and $y_j \in K_{\delta_j}$ for all $i, j$. Then $\{x_\alpha\}$ and $\{y_\alpha\}$ converges to $x$ and $y$ respectively. Then for each $\alpha$ we have that $x_\alpha = x_1^\alpha x_2^\alpha \ldots x_{n_\alpha}^\alpha$ for some $n_\alpha \in \mathbb{N}$ where $x_i^\alpha \in K_{\alpha_i}$ for $1 \leq i \leq n_\alpha$. But $x_\alpha$ eventually lies in any basic open set of the form $U_1U_2\ldots U_n$ around $x$ where $U_i$ is an open neighborhood of $x_i$ in $K_{\gamma_i}$ that does not contain $e$. Hence without loss of generality for each $\alpha$ we can assume that $n_\alpha =n$ and ${\alpha_i} =  \gamma_i$ for $1\leq i \leq n$. Similarly we can assume that $y_\alpha = y_1^\alpha y_2^\alpha \ldots y_m^\alpha$ where $y_j^\alpha \in K_{\delta_j}$ for $1\leq j \leq m$.
	
	\vspace{0.05in}
	
	\noi Now consider the Michael topology on $\mathfrak{C}(K)$ and recall \cite{NA} that the map 
	$$x\mapsto \{x\} : K \ra \mathfrak{C}(K)$$
	is a homeomorphism onto its image which is also closed in $\mathfrak{C}(K)$. If $\gamma_n \neq \delta_1$ then using the same technique as above, we can easily see that the net $\{(x_1^\alpha \ldots x_n^\alpha y_1^\alpha\ldots\\ \ldots y_m^\alpha)\}_\alpha$ converges to $(x_1\ldots x_ny_1\ldots y_m)$ in $K$. Hence using the homeomorphism mentioned above we have that $supp (p_{x_\alpha}*p_{y_\alpha}) = \{(x_1^\alpha \ldots x_n^\alpha y_1^\alpha\ldots y_m^\alpha)\}$ converges to $\{(x_1\ldots x_ny_1\ldots y_m)\} = supp (p_x*p_y)$ in $\mathfrak{C}(K)$ in Michael Topology.
	
	\vspace{0.05in}
	
	\noi Now let $\gamma_n = \delta_1$. For any $i_0 \in \{1, 2, \ldots n\}$, pick and fix open neighborhoods $U_i$ around $x_i$ in $K_{\gamma_i}$ for all $i\neq i_0$. Then for any open neighborhood $V$ around $x_{i_0}$ in $K_{\gamma_{i_0}}$ we have that $x_1^\alpha x_2^\alpha \ldots x_{n}^\alpha$ lie eventually in $U_1U_2\ldots U_{i_0-1}VU_{i_0+1}\ldots U_n$. Hence we must have that $\{x_{i_0}^\alpha\}$ lie eventually in $V$, \textit{i.e,} $\{x_{i_0}^\alpha\}$ converges to $x_{i_0}$ in $K_{\gamma_{i_0}}$. Thus  $\{x_i^\alpha\}$ converges to $x_i$ in $K_{\gamma_i}$ and $\{y_j^\alpha\}$ converges to $y_j$ in $K_{\delta_j}$ for each $i, j$. In particular, we have that the net $\{(x_n^\alpha, y_1^\alpha)\}_\alpha$ converges to $(x_n, y_1)$ in $K_{\gamma_n} \times K_{\gamma_n}$.  Since the map $(x, y) \mapsto supp (p_x*p_y) : K_{\gamma_n}\times K_{\gamma_n} \ra \mathfrak{C}(K_{\gamma_n})$ is continuous, we have that $supp(p_{x_n^\alpha}* p_{y_1^\alpha})$ converges to $supp(p_{x_n}*p_{y_1})$ in $\mathfrak{C}(K_{\gamma_n})$. 
	
	\vspace{0.05in}
	
	\noi As discussed before, we also have that the nets $\big{\{}\{x_i^\alpha\}\big{\}}_\alpha$ and $\big{\{}\{y_j^\alpha\}\big{\}}_\alpha$ converge to $\{x_i\}$ and $\{y_j\}$ in $\mathfrak{C}(K_{\gamma_i})$ and $\mathfrak{C}(K_{\delta_j})$  respectively for $1\leq i < n$ and $1<j \leq m$. Hence it follows from the construction of basic open sets of $K$ that $supp (p_{x_\alpha}*p_{y_\alpha}) = \{(x_1^\alpha\ldots x_{n-1}^\alpha)\} supp (p_{x_n^\alpha}* p_{y_1^\alpha}) \{(y_2^\alpha\ldots y_m^\alpha)\}$ converges to $\{(x_1\ldots x_{n-1})\} supp (p_{x_n}* p_{y_1}) \{(y_2\ldots y_m)\} = supp (p_x *p_y)$ in $\mathfrak{C}(K)$.
	
	\vspace{0.05in}
	
	\noi Finally, it follows immediately from the construction of convolution products that the map $*: M(K) \times M(K) \ra M(K)$ is positive bilinear. Using the same technique as above we can see that the restricted map $*|_{M^+(K) \times M^+(K)} : M^+(K) \times M^+(K) \ra M^+(K)$ is continuous. Thus we have that the pair $(K, *)$ indeed forms a semihypergroup.
	
	%\vspace{0.05in}
	
	\paragraph{\textbf{$K_\alpha$ as a sub-semihypergroup:}}
	
	\hspace{0.03in}
	
	\noi Note that for each $\alpha$ the map $i_\alpha: K_\alpha \ra K$ given by 
	$$i_\alpha(x) = \begin{cases}
	(x) , & \mbox{  if } x\neq e  \\
	e  , & \mbox{  if } x = e  
	\end{cases}$$  where $(x)$ is a word of length $1$ in $K$, enables us to view $K_\alpha$ as a sub-semihypergroup of $K$ since  $i_\alpha(K_\alpha)*i_\alpha(K_\alpha) \subset i_\alpha(K_\alpha)$ and $i_\alpha$ is a homeomorphism onto its image. 
	
	\vspace{0.05in}
	
	\paragraph{\textbf{Universal property:}}
	
	\hspace{0.03in}

	\noi Now let $H$ be a semihypergroup and $\phi_\alpha : K_\alpha \ra H$ be a homomorphism for each $\alpha$. For each $x=x_1x_2\ldots x_n \in K$ where $x_i \in K_{\alpha_i}$ for $1\leq i \leq n$ define
	$$\Gamma(p_x) := p_{\phi_{\alpha_1}(x_1)} * p_{\phi_{\alpha_2}(x_2)} * \ldots * p_{\phi_{\alpha_n}(x_n)}  . $$
	\noi We can then extend $\Gamma$ to $M(K)$ as in the proof of Theorem \ref{homtopc} to get a positive continuous linear map $\Gamma: M(K) \ra M(H)$.
	
	\vspace{0.05in}
	
	\noi Since for any $x\in K_\alpha$ we have that $\Gamma(p_(x)) = p_{\phi_\alpha(x)}$, it follows immediately from Theorem \ref{homtopc} that $\Gamma|_{M(K_\alpha)}= \Gamma \circ \Gamma_{i_\alpha}= \Gamma_{\phi_\alpha}$ for each $\alpha$. Now to show that $\Gamma$ is a homomorphism, pick any two words $x=x_1x_2\ldots x_n$, $y=y_1y_2\ldots y_m$ in $K$ where $x_i \in K_{\alpha_i}$, $y_j \in K_{\beta_j}$ for $1\leq i \leq n$, $1\leq j \leq m$. Note that if $\alpha_n \neq \beta_1$ then we have that

	\begin{eqnarray*}
		\Gamma(p_x*p_y) &=& \Gamma(p_{x_1\ldots x_ny_1\ldots y_m})\\
		&=& p_{\phi_{\alpha_1}(x_1)}* \ldots *p_{\phi_{\alpha_n}(x_n)}*p_{\phi_{\beta_1}(y_1)}*\ldots * p_{\phi_{\beta_m}(y_m)}\\
		&=& \big{(}p_{\phi_{\alpha_1}(x_1)}* \ldots *p_{\phi_{\alpha_n}(x_n)}\big{)}*\big{(}p_{\phi_{\beta_1}(y_1)}*\ldots * p_{\phi_{\beta_m}(y_m)}\big{)}\\
		&=& \Gamma(p_{x_1x_2\ldots x_n}) * \Gamma(p_{y_1y_2\ldots y_m}) = \Gamma(p_x) * \Gamma(p_y)
	\end{eqnarray*}
	as required.
	
	%\vspace{0.2in}
	
	\noi Now pick any $x \in K_\gamma$ and $y, z \in K_\beta$ where $\gamma\neq \beta$. Then there exists a net $\{m_\alpha\}$ in $M_F^+(K_\beta)$ such that $m_\alpha$ converges to $(p_y*p_z)$ in $M^+(K_\beta)$. Hence $(p_x*m_\alpha)$ converges to $(p_x*(p_y*p_z))= (p_x*p_y*p_z)$ in $M^+(K)$. Then for each $\alpha$ we have that $m_\alpha = \sum_{i=1}^{n_\alpha} c_i^\alpha p_{x_i^\alpha}$ for some $n_\alpha \in \mathbb{N}$, $c_i^\alpha \in \mathbb{C}$ and $x_i^\alpha \in K_\beta$. Since $\Gamma$ is continuous on $M^+(K)$, we have that

	\begin{eqnarray*}
		\Gamma(p_x*p_y*p_z) &=& \Gamma\big{(}\lim_\alpha (p_x *m_\alpha)\big{)}\\
		&=& \lim_\alpha \Gamma(p_x * m_\alpha)\\
		&=& \lim_\alpha \Gamma\Big{(}p_x * \sum_{i=1}^{n_\alpha} c_i^\alpha p_{x_i^\alpha}\Big{)}\\
		&=&\lim_\alpha \Gamma\Big{(} \sum_{i=1}^{n_\alpha} c_i^\alpha (p_x*p_{x_i^\alpha})\Big{)}\\
		&=& \lim_\alpha  \sum_{i=1}^{n_\alpha} c_i^\alpha \ \Gamma\big{(}p_x*p_{x_i^\alpha}\big{)}\\
		&=& \lim_\alpha \sum_{i=1}^{n_\alpha}c_i^\alpha \Big{(}\Gamma(p_x)*\Gamma(p_{x_i^\alpha})\Big{)}\\
		&=& \lim_\alpha  \Gamma(p_x)*\Big{(}\sum_{i=1}^{n_\alpha}c_i^\alpha\Gamma(p_{x_i^\alpha})\Big{)}\\
		&=&\lim_\alpha \Gamma(p_x) * \Gamma\Big{(} \sum_{i=1}^{n_\alpha}c_i^\alpha p_{x_i^\alpha}\Big{)}\\
		&=& \Gamma(p_x) * \lim_\alpha \Gamma(m_\alpha)%\\
		%&=& \Gamma(p_x) * \Gamma \big{(}\lim_\alpha m_\alpha\big{)} 
		= \Gamma(p_x) * \Gamma(p_y*p_z)
	\end{eqnarray*}
	\noi where the fourth equality follows since the convolution $(\mu, \nu) \mapsto \mu *\nu : M(K)\times M(K) \ra M(K)$ is bilinear, the fifth and eighth equality follows from the linearity of $\Gamma$ on $M(K)$ and finally the seventh equality follows since the map $(\mu, \nu) \mapsto \mu *\nu : M(H)\times M(H) \ra M(H)$ is bilinear.
	
	%\vspace{0.05in}
	
	\noindent Similarly for any $x, y \in K_\gamma$ and $z\in K_\beta$ where $\gamma\neq\beta$ we have that
	$$\Gamma(p_x*p_y*p_z) = \Gamma(p_x*p_y) * \Gamma(p_z)  .$$
	Using these two equalities simultaneously for any two words $x=x_1x_2\ldots x_n$, $y=y_1y_2\ldots y_m$ in $K$ where $x_i \in K_{\alpha_i}$, $y_j \in K_{\beta_j}$ for $1\leq i \leq n$, $1\leq j \leq m$ and $\alpha_n = \beta_1$, we have that
	\begin{eqnarray*}
		\Gamma(p_x*p_y) &=& \Gamma (p_{x_1x_2\ldots x_{n-1}} *p_{x_n}*p_{y_1}*p_{y_2y_3\ldots y_m})\\
		&=& \Gamma(p_{x_1x_2\ldots x_{n-1}}) *\Gamma(p_{x_n}*p_{y_1})*\Gamma(p_{y_2y_3\ldots y_m})\\
		&=& \Gamma(p_{x_1x_2\ldots x_{n-1}}) *\Gamma_{\phi_{\alpha_n}}(p_{x_n}*p_{y_1})*\Gamma(p_{y_2y_3\ldots y_m})\\
		&=& \Gamma(p_{x_1\ldots x_{n-1}}) *\Big{(}\Gamma_{\phi_{\alpha_n}}(p_{x_n})*\Gamma_{\phi_{\alpha_n}}(p_{y_1})\Big{)}*\Gamma(p_{y_2\ldots y_m})\\
		&=& \Gamma(p_{x_1\ldots x_{n-1}})*\Big{(}p_{\phi_{\alpha_n}(x_n)} * p_{\phi_{\alpha_n}(y_1)}\Big{)}*\Gamma(p_{y_2\ldots y_m})\\
		&=& \Big{(}p_{\phi_{\alpha_1}(x_1)} * \ldots * p_{\phi_{\alpha_{n-1}}(x_{n-1})}\Big{)}* \Big{(}p_{\phi_{\alpha_n}(x_n)} *p_{\phi_{\beta_1}(y_1)}\Big{)}\\
		&& * \Big{(}p_{\phi_{\beta_2}(y_2)} * \ldots * p_{\phi_{\beta_m}(y_m)}\Big{)}\\
		&=& \Big{(}p_{\phi_{\alpha_1}(x_1)} * \ldots * p_{\phi_{\alpha_{n-1}}(x_{n-1})}* p_{\phi_{\alpha_n}(x_n)}\Big{)}\\
		&& *\Big{(}p_{\phi_{\beta_1}(y_1)}* p_{\phi_{\beta_2}(y_2)} * \ldots * p_{\phi_{\beta_m}(y_m)}\Big{)}\\
		&=& \Gamma\big{(}p_{x_1x_2\ldots x_n}\big{)} * \Gamma\big{(}p_{y_1y_2\ldots y_m}\big{)} \ = \ \Gamma(p_x)*\Gamma(p_y)\end{eqnarray*}
	\noi as required. Note that here the fourth equality follows since $\Gamma_{\phi_{\alpha_n}}: M(K_{\alpha_n}) \ra M(H)$ induced by $\phi_{\alpha_n}$ is a homomorphism and the fifth equality follows from the construction of $\Gamma_{\phi_{\alpha_n}}$ as shown in Theorem \ref{homtopc}.
	
	\vspace{0.05in}
	
	%\textit{\textbf{Uniqueness:}}
	
	\noindent Finally, note that if $\Theta: M(K) \ra M(H)$ is another positive linear continuous homomorphism such that $\Theta|_{M(K_\alpha)} = \Gamma_{\phi_\alpha}$ then for any $x_1x_2\ldots x_n \in K$ where $x_i\in K_{\alpha_i}$ for $1\leq i \leq n$ we have that
	\begin{eqnarray*}
		\Theta(p_{x_1x_2\ldots x_n})&=& \Theta (p_{x_1}*p_{x_2} * \ldots * p_{x_n})\\
		&=& \Theta(p_{x_1})*\Theta(p_{x_2})* \ldots *\Theta(p_{x_n})\\
		&=& \Theta|_{M(K_{\alpha_1})}(p_{x_1})*\Theta|_{M(K_{\alpha_2})}(p_{x_2})*\ldots * \Theta|_{M(K_{\alpha_n})}(p_{x_n})\\
		&=& \Gamma_{\phi_{\alpha_1}}(p_{x_1}) * \Gamma_{\phi_{\alpha_2}}(p_{x_2})* \ldots * \Gamma_{\phi_{\alpha_n}}(p_{x_n})\\
		&=& \Gamma|_{M(K_{\alpha_1})}(p_{x_1})*\Gamma|_{M(K_{\alpha_2})}(p_{x_2})*\ldots * \Gamma|_{M(K_{\alpha_n})}(p_{x_n})\\
		&=& \Gamma (p_{x_1}) *\Gamma (p_{x_2})*\ldots * \Gamma (p_{x_n})\\
		&=& \Gamma (p_{x_1}*p_{x_2} * \ldots * p_{x_n})= \Gamma (p_{x_1x_2\ldots x_n}) .
	\end{eqnarray*}
	
	%\vspace{0.2in}
	
	\noindent Thus we see that the map $\Gamma$ constructed above is unique and hence $(K, *)$ satisfies all the conditions of Definition \ref{deffp} giving us that $K= \prod^*_\alpha K_\alpha \ $.
	\qed
\end{proof}

%\vspace{0.06in}

\begin{remark}
	Note that in classical literature, this natural construction fails to provide a free product of topological semigroups or groups. But in the case of pure semihypergroups, the natural construction suffices to provide a free product structure, \textit{i.e}, in the most trivial case, when applied to a family of semigroups which are pure semihypergroups, the natural construction explained above results in a semihypergroup, rather than a semigroup.
\end{remark}

%%%%%%%%%%%%%%%%%%%%%%%%%%%%%%%%%%%%%%%%%%%%%%%%%%%%%%%%%%%%%%%%%%%%%%%%%%%%%%%%%%%%%%%%%%%%%%%%%%%%%%%%%%%%%%%%%%%%%%%%%%%%%%%%%%%%%%%%%%%%%%%%
%%%%%%%%%%%%%%%%%%%%%%%%%%%%%%%%%%%%%%%      OPEN QUESTIONS AND FURTHER WORK       %%%%%%%%%%%%%%%%%%%%%%%%%%%%%%%%%%%%%%%%%%%%%%%%%%%%%%%%%%%%%
%%%%%%%%%%%%%%%%%%%%%%%%%%%%%%%%%%%%%%%%%%%%%%%%%%%%%%%%%%%%%%%%%%%%%%%%%%%%%%%%%%%%%%%%%%%%%%%%%%%%%%%%%%%%%%%%%%%%%%%%%%%%%%%%%%%%%%%%%%%%%%%%
%%%%%%%%%%%%%%%%%%%%%%%%%%%%%%%%%%%%%%%%%%%%%%%%%%%%%%%%%%%%%%%%%%%%%%%%%%%%%%%%%%%%%%%%%%%%%%%%%%%%%%%%%%%%%%%%%%%%%%%%%%%%%%%%%%%%%%%%%%%%%%%%

\section{Open Questions and Further Work}
\label{Open Problems}

\noi As discussed briefly in the introduction, the lack of prior extensive research since its inception and the significant examples available in coset and orbit spaces of locally compact groups, Lie groups and homogeneous spaces, makes way to a number of exciting new avenues of research on semihypergroups. 

\vspace{0.03in}

\noi We have already discussed \cite{CB} spaces of almost periodic and weakly almost periodic functions, amenability, ideals and homomorphisms on a semihypergroup. The following is some immediate potential problems related to this article that we are currently working on and/or intend to work on in near future.

\vspace{0.1 in}

\noi \textbf{Problem 1:} Investigate the behavior of free products on semihypergroups for compact case, thereby exploring amenability properties of the resulting free product. While the compactness of the individual components make some of the deductions easier enabling us to drop some prior restrictions, as in most cases, the lack of an algebra in the underlying space prevents any direct extension of similar results \cite{MO,MK} to this setup.

\vspace{0.1in}

\noi \textbf{Problem 2:} Generalize the construction of the above free product for amalgamated products as well as investigate the properties of the resulting semihypergroup, similar to the works in \cite{MK}. 

\vspace{0.1in}

\noi \textbf{Problem 3:} Amend and update the construction of the above free product for a family of hypergroups, such that the resulting free product becomes a hypergroup. This needs updating the construction of the natural topology $\tau$ so that it takes into account the involution of respective hypergroups, eventually resulting in a natural involution of the form $x_1x_2\ldots x_{n-1}x_n \mapsto x_{n}^- x_{n-1}^-\ldots x_2^-x_1^- $.

%%%%%%%%%%%%%%%%%%%%%%%%%%%%%%%%%%%%%%%%%%%%%%%%%%%%%%%%%%%%%%%%%%%%%%%%%%%%%%%%%%%%%%%%%%%%%%%%%%%%%%%%%%%%%%%%%%%%%%%%%
%%%%%%%%%%%%%%%%%%%%%%%%%%%%%%%%%%%%%%%%%%%%%%%%%%%%%%%%%%%%%%%%%%%%%%%%%%%%%%%%%%%%%%%%%%%%%%%%%%%%%%%%%%%%%%%%%%%%%%%%%%%%%%%%%%%%%%

%Text with citations \cite{RefB} and \cite{RefJ}.
%\subsection{Subsection title}
%\label{sec:2}
%as required. Don't forget to give each section
%and subsection a unique label (see Sect.~\ref{sec:1}).
%\paragraph{Paragraph headings} Use paragraph headings as needed.
%\begin{equation}
%a^2+b^2=c^2
%\end{equation}

\section{acknowledgements}
	
	\noi The author would like to sincerely thank her doctoral thesis advisor  Dr. Anthony To-Ming Lau, for suggesting the topic of this study and for the helpful discussions during the course of this work. She is also grateful to the referee for the valuable comments and suggestions.

% Authors must disclose all relationships or interests that 
% could have direct or potential influence or impart bias on 
% the work: 
%
%\section*{Declarations}
%%%%%%%%%%%%%%%%%%%%%%%%%%%%%%%%%%%%%%%%%%%%%%%%%%%%%%%%%%%%%%%%%%%%%%%%%%%%%%%%%%%%%%%%%%%%%%%%%%%%%%%%%%%%%%%%%%%%%%%%%%%%%%%%%%%%%%%%%%%%%%%%
%%%%%%%%%%%%%%%%%%%%%%%%%%%%%%%%%%%%%%%             DECLARATIONS              %%%%%%%%%%%%%%%%%%%%%%%%%%%%%%%%%%%%%%%%%%%%%%%%%%%%%%%%%%%%%%%%%%
%%%%%%%%%%%%%%%%%%%%%%%%%%%%%%%%%%%%%%%%%%%%%%%%%%%%%%%%%%%%%%%%%%%%%%%%%%%%%%%%%%%%%%%%%%%%%%%%%%%%%%%%%%%%%%%%%%%%%%%%%%%%%%%%%%%%%%%%%%%%%%%%
%%%%%%%%%%%%%%%%%%%%%%%%%%%%%%%%%%%%%%%%%%%%%%%%%%%%%%%%%%%%%%%%%%%%%%%%%%%%%%%%%%%%%%%%%%%%%%%%%%%%%%%%%%%%%%%%%%%%%%%%%%%%%%%%%%%%%%%%%%%%%%%%

%%%%%%%%%%%%%%%%%%%%%%%%%%%%%%%%%%%%%%%%%%%%%%%%%%%%%%%%%%%%%%%%%%%%%%%%%%%%%%%%%%%%%%%%%%%%%%%%%%%%%%%%%%%%%%%%%%%%%%%%%%%%%%%%%%%%%%

% \section*{Conflict of interest}
%
% The authors declare that they have no conflict of interest.

% BibTeX users please use one of
%\bibliographystyle{spbasic}      % basic style, author-year citations
%\bibliographystyle{spmpsci}      % mathematics and physical sciences
%\bibliographystyle{spphys}       % APS-like style for physics
%\bibliography{}   % name your BibTeX data base

% Non-BibTeX users please use

\end{document}